\newcommand{\conj}
\newtheorem{definition}{Definition}[section]
\newtheorem{proposition}[definition]{Proposition}
\newtheorem{theorem}[definition]{Theorem}
\newtheorem{lemma}[definition]{Lemma}
\begin{document}
	\baselineskip 18pt
	\title[Enhanced RDR method]{Enhanced randomized Douglas-Rachford method: Improved  probabilities and adaptive momentum }
	
\author{Liqi Guo}
\address{Beihang University, Beijing, 100191, China. }
\email{glq2023@buaa.edu.cn}

\author{Ruike Xiang}
\address{Beihang University, Beijing, 100191, China. }
\email{xiangrk@buaa.edu.cn}

\author{Deren Han}
\address{LMIB of the Ministry of Education, School of Mathematical Sciences, Beihang University, Beijing, 100191, China. }
\email{handr@buaa.edu.cn}

\author{Jiaxin Xie}
\address{LMIB of the Ministry of Education, School of Mathematical Sciences, Beihang University, Beijing, 100191, China. }
\email{xiejx@buaa.edu.cn}

	\begin{abstract}
		Randomized iterative methods have gained  recent interest in machine learning and signal processing for solving large-scale linear systems. One such example is the randomized Douglas–Rachford (RDR) method, which updates the iterate by reflecting it through two randomly selected hyperplanes and taking a convex combination with the current point. In this work, we enhance RDR by introducing improved sampling strategies and an adaptive heavy-ball momentum scheme. Specifically, we incorporate without-replacement and volume sampling into RDR, and establish stronger convergence guarantees compared to conventional i.i.d. sampling. Furthermore, we develop an adaptive momentum mechanism that dynamically adjusts step sizes and momentum parameters based on previous iterates, and prove that the resulting method achieves linear convergence in expectation with improved convergence bounds. Numerical experiments demonstrate that the enhanced RDR method consistently  outperforms the original version, providing substantial practical benefits across a range of problem settings.
	\end{abstract}
	
	\maketitle
	
	\let\thefootnote\relax\footnotetext{Key words: stochastic methods, Douglas–Rachford,  volume sampling, heavy ball momentum, adaptive strategy,  linear systems}
	
	\let\thefootnote\relax\footnotetext{Mathematics subject classification (2020): 65F10, 65F20, 90C25, 15A06, 68W20}

	\section{Introduction}
	
Solving linear systems is a fundamental problem in numerous fields, including statistics \cite{hastie2009elements}, scientific computing \cite{golub2013matrix}, and machine learning \cite{boyd2018introduction,bishop2006pattern}. With the advent of the big data era, there is an increasing demand for efficient algorithms capable of handling linear systems of unprecedented scale. Recent research has focused on developing randomized iterative methods \cite{needell2014stochasticMP,necoara2019faster,strohmer2009randomized,han2024randomized,zeng2024adaptive,Gow15,xie2024randomized}, which offer advantages such as low per-iteration cost, optimal complexity, ease of implementation, and strong empirical performance on large-scale problems.

Among these methods, the randomized Kaczmarz (RK) algorithm is one of the most widely studied. The original Kaczmarz method \cite{karczmarz1937angenaherte} cyclically projects the current iterate onto the solution space of individual linear constraints and converges to a solution of consistent systems. Empirical studies have shown that using the rows of the coefficient matrix  in a random order, rather than a fixed deterministic one, can significantly accelerate convergence \cite{herman1993algebraic,natterer2001mathematics,feichtinger1992new}. Strohmer and Vershynin \cite{strohmer2009randomized} formalized this idea by introducing a randomized variant in which rows are sampled with probabilities proportional to their squared norms, and proved that the RK method converges linearly in expectation for consistent systems.
Due to their simplicity, efficiency, and theoretical guarantees, Kaczmarz-type methods have found widespread applications in areas such as phase retrieval \cite{tan2019phase,huang2022linear}, compressed sensing \cite{schopfer2019linear,yuan2022adaptively,jeong2023linear}, ridge regression \cite{hefny2017rows,gazagnadou2022ridgesketch},  tensor recovery \cite{chen2021regularized,ma2022randomized}, adversarial optimization \cite{huang2024randomized}, and absolute value equations \cite{xie2024randomized}.

A recent addition to this family of randomized methods is the randomized Douglas-Rachford (RDR) algorithm \cite{han2024randomized}, which introduces randomization techniques into the classical Douglas-Rachford splitting method \cite{lindstrom2021survey,Ara14,Li16,Eck92} for solving feasibility problems arising from linear systems.  By introducing stochasticity into the iterative process, RDR aims to combine the robustness of operator splitting schemes with the efficiency of randomized updates. Preliminary studies demonstrate that RDR achieves promising convergence behavior and strong empirical performance. Notably, it converges linearly in expectation, with a convergence rate that is independent of the dimension of the coefficient matrix.

An evident limitation of the RDR method is its probability criterion for selecting the hyperplane for reflecting the current iterate.
The current approach employs a with-replacement sampling strategy \cite{han2024randomized}, which simplifies convergence analysis but may repeatedly select certain rows, particularly when the row norms of the coefficient matrix vary significantly. This redundancy can reduce the effectiveness of reflective projections and slow convergence. In addition, current methods do not adequately exploit the angular relationships between hyperplanes when selecting the projection direction. 
Moreover, Polyak's heavy ball momentum \cite{polyak1964some} has also been incorporated into the RDR framework \cite{han2024randomized} to accelerate convergence. While this momentum-based approach can enhance empirical performance, it introduces additional hyperparameters, namely the step size and momentum coefficient, which require careful tuning in practice.   This tuning process is often problem-dependent and may compromise the robustness and ease of deployment of the method.

In this paper, we enhance the RDR framework by proposing the practical RDR (PRDR) method, which improves the probabilistic selection of projection hyperplanes. Depending on the structural properties of the coefficient matrix, PRDR flexibly employs either without-replacement sampling or volume sampling \cite{rodomanov2020randomized,macchi1975coincidence,xiang2025randomized} as the selection criterion. The former mitigates redundancy by avoiding repeated selection of reflection hyperplanes, while the latter further incorporates geometric information, specifically the angular relationships between hyperplanes, thereby improving the quality of reflections. Notably, when the row norms of the matrix differ significantly, PRDR can substantially enhance the efficiency of the iterative process. 
Furthermore, inspired by recent work \cite{zeng2024adaptive,sun2025connecting}, we further propose the adaptive momentum PRDR (AmPRDR) method, which incorporates a dynamic variant of heavy ball momentum. In this scheme, both the step size and momentum parameter are adaptively adjusted at each iteration based on the previous iterates.  Our theoretical analysis confirms that AmPRDR converges linearly in expectation with an improved upper bound. Numerical experiments demonstrate that incorporating practical probabilistic criteria and adaptive momentum techniques into the RDR method can significantly enhance its accuracy and efficiency.

	 The remainder of the paper is organized as follows.
	After introducing some preliminaries in Section 2,  we present and analyze our PRDR method in Section 3. 
	In Section 4, we propose AmPRDR and show its linear convergence rate. 	In Section 5, we demonstrate numerical experiments that illustrate the effectiveness of algorithm design. Finally, we conclude the paper in Section 6. Proofs of all main results are provided in the appendix.

	\section{Preliminaries}
	
	\subsection{Notations}
	Throughout the paper, for any random variables $\xi$ and $\zeta$, we use $\mathbb{E}[\xi]$ and $\mathbb{E}[\xi\lvert \zeta]$ to denote the expectation of $\xi$ and the conditional expectation of $\xi$ given $\zeta$. For any integer $s$ satisfying \(1 \leq s \leq m\), we use \(\binom{[m]}{s}\) to denote the collection of all \(s\)-element subsets of \([m] := \{1, \ldots, m\}\). 
	For a vector $x\in\mathbb{R}^n$, we use $x_i$, $x^\top$, and $\|x\|_2$ to denote the $i$-th element, the transpose, and the Euclidean norm of $x$, respectively. We use $\text{diag}(x)$ to denote the diagonal matrix whose entries on the diagonal are the components of $x$.

	For any matrix $A\in\mathbb{R}^{m\times n}$, we use $a^\top_{i}$, $A_{\mathcal{S}}$, $A^\top$, $A^{\dagger}$, $ \|A\|_2 $, $ \|A\|_F $, $\operatorname{rank}(A)$, and $\mbox{Range}(A)$ to denote the $i$-th row, the row  submatrix indexed by $\mathcal{S} $, the transpose, the Moore-Penrose pseudoinverse, the spectual norm, the Frobenius norm, the rank, and the range space of $A$, respectively.
	The nonzero singular values of a matrix $A$ are $\sigma_1(A)\geq\sigma_2(A)\geq\cdots\geq\sigma_{r}(A):=\sigma_{\min}(A)>0$, where $r$ is the rank of $A$
	and $\sigma_{\min}(A)$ denotes the smallest nonzero singular values of $A$. 
	A symmetric matrix $A\in\mathbb{R}^{n\times n}$ is said to be positive definite if $x^\top A x> 0$ holds for every non-zero vector
	$x\in\mathbb{R}^n$.

	

    \begin{definition}[Volume sampling] 
		\label{vs}
		Let \( A \) be an \( m \times n \) matrix and suppose the integer \( s \) satisfies \( 1 \leq s \leq \operatorname{rank}(A) \). Consider a random variable \( \mathcal{S}_0 \) that takes values in \( \binom{[m]}{s} \). We define \( \mathcal{S}_0 \) as being generated according to \( s \)-element volume sampling with respect to \( AA^\top \), denoted by \( \mathcal{S}_0 \sim \operatorname{Vol}_s(AA^\top) \), if for all \( \mathcal{S} \in \binom{[m]}{s} \), the probability that \( \mathcal{S}_0 \) equals \( \mathcal{S} \) is given by
		\[
		\mathbb{P}(\mathcal{S}_0 = \mathcal{S}) = \frac{\operatorname{det}(A_{\mathcal{S}} A_{\mathcal{S}}^\top)}{\sum_{\mathcal{J} \in \binom{[m]}{s}} \operatorname{det}(A_{\mathcal{J}} A_{\mathcal{J}}^\top)}.
		\]
	\end{definition}

\subsection{The randomized Douglas-Rachford method}
	
Consider the consistent linear system
\begin{equation}
	\label{linear system}
	Ax = b, \ \ A\in\mathbb{R}^{m\times n}, \ b\in\mathbb{R}^m,
\end{equation}
where each row defines a hyperplane 
\(\mathcal{H}_i := \{ x \in \mathbb{R}^n \mid  \langle a_i, x \rangle = b_i \}.\)
We use \( \mathcal{P}_{\mathcal{H}_i} \) and \( \mathcal{R}_{\mathcal{H}_i} \) to denote the orthogonal projection and reflection operators with respect to \( \mathcal{H}_i \), respectively. Specifically, for any \( x \in \mathbb{R}^n \), these operators are given by
\[
\mathcal{P}_{\mathcal{H}_i}(x) = x - \frac{\langle a_i, x \rangle - b_i}{\|a_i\|_2^2} a_i \ \ \text{and} \ \
\mathcal{R}_{\mathcal{H}_i}(x) = x - 2\frac{\langle a_i, x \rangle - b_i}{\|a_i\|_2^2} a_i.
\]
Starting from an initial point \( x^0 \in \mathbb{R}^n \), the randomized Kaczmarz (RK) method \cite{strohmer2009randomized} updates the iterates via
\[
x^{k+1} = \mathcal{P}_{\mathcal{H}_{i_k}}(x^k) = x^k - \frac{\langle a_{i_k}, x^k \rangle - b_{i_k}}{\|a_{i_k}\|_2^2} a_{i_k},
\]
where \( \mathbb{P}(i_k = i) = \frac{\|a_i\|_2^2}{\|A\|_F^2} \). This procedure is illustrated in Figure~\ref{figueRK}.
In contrast, the randomized Douglas–Rachford (RDR) method \cite{han2024randomized} generates iterates via
\[
x^{k+1} = \frac{1}{2} \left(I + \mathcal{R}_{\mathcal{H}_{i_{k_2}}} \mathcal{R}_{\mathcal{H}_{i_{k_1}}} \right)(x^k),
\]
where the indices \( i_{k_1} \) and \( i_{k_2} \) are independently sampled from the same distribution, i.e.,
$
\mathbb{P}(i_{k_1} = i) = \mathbb{P}(i_{k_2} = i) = \frac{\|a_i\|_2^2}{\|A\|_F^2}.
$
The Douglas–Rachford (DR) method is illustrated in Figure~\ref{figueRDR}.


		\begin{figure}[hptb]
		\centering
		\begin{subfigure}{0.5\textwidth}
			\centering
		\begin{tikzpicture}
	\draw (-2,0) -- (2,0);
	\draw (-1.5,2) -- (1.2,-1.6);
	\draw (-1.2,-1.6) -- (1.5,2);
	
	\filldraw (0,0) circle [radius=1.5pt]
	(1.2,1.6) circle [radius=1.5pt]
	(1.2,0) circle [radius=1.5pt]
	(0.432,-0.576) circle [radius=1.5pt];
	
	\draw (-0.5,0.25) node {$x^*$}
	(1.55,1.6) node {$x^k$}
	(1.75,0.25) node {$x^{k+1}$}
	(1.1,-0.6) node {$x^{k+2}$};
	\draw (-1.5,0.25) node {$\mathcal{H}_2$}
	(-1.15,-1.05) node {$\mathcal{H}_1$}
	(-1.25,1.2) node {$\mathcal{H}_3$};
	
	\draw [-stealth] (1.2,1.6) -- (1.2,0.1);
	\draw [-stealth] (1.2,0) -- (0.512,-0.516);
\end{tikzpicture}
			\caption{Kaczmarz}
			\label{figueRK}
		\end{subfigure}\hfill
		\begin{subfigure}{0.5\textwidth}
			\centering
			\begin{tikzpicture}
			\draw (-2,0) -- (2,0);
			\draw (-1.5,2) -- (1.2,-1.6);
			\draw (-1.2,-1.6) -- (1.5,2);
			
			\filldraw (0,0) circle [radius=1.5pt]
			(0.5,2) circle [radius=1.5pt]
			(1.78,1.025) circle [radius=1.5pt]
			(1.78,-1.025) circle [radius=1.5pt]
			(1.14,0.4875) circle [radius=1.5pt];
			
			\draw (-0.5,0.25) node {$x^*$}
			(0.9,2.1) node {$x^k$}
			(3.05,1.125) node {$y^k=\mathcal{R}_{\mathcal{H}_1}(x^k)$}
			(3.05,-1.025) node {$z^k=\mathcal{R}_{\mathcal{H}_2}(y^k)$}
			(0.75,0.3) node {$x^{k+1}$};
			\draw (-1.5,0.25) node {$\mathcal{H}_2$}
			(-1.15,-1.05) node {$\mathcal{H}_1$}
			(-1.25,1.2) node {$\mathcal{H}_3$};
			
			\draw [-stealth] (0.5,2) -- (1.7,1.085);
			\draw [-stealth] (1.78,1.025) -- (1.78,-0.925);
			\draw [dash pattern=on 3pt off 2pt] (0.5,2) -- (1.78,-1.025);
		\end{tikzpicture}
			\caption{ Douglas-Rachford}
			\label{figueRDR}
		\end{subfigure}
		\caption{Geometric interpretations of the Kaczmarz method and the Douglas-Rachford method.}
		\label{fig:rk-rdr}
	\end{figure}
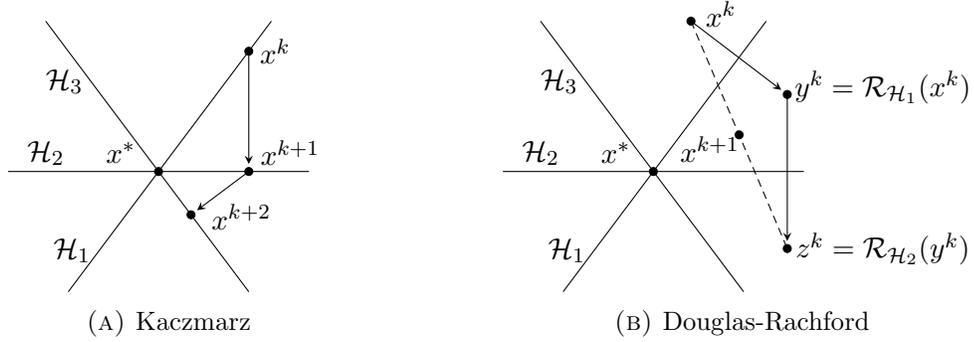

While effective, this independent sampling strategy may frequently select the same hyperplane, particularly when the row norms of \( A \) vary significantly. Moreover, the method does not exploit the geometric relationships between hyperplanes, which could otherwise inform more effective projection directions. In this paper, we aim to develop improved sampling strategies that reduce redundancy and incorporate geometric information to accelerate convergence.

Finally, we note that the RDR method was shown in \cite[Theorem 3.1]{han2024randomized} to converge linearly in expectation:
\begin{equation}\label{xie-RDR-rate}
	\mathbb{E}\left[\|x^{k} - x_*^0\|_2^2 \right] \leq \left( \frac{1}{2} + \frac{1}{2} \left(1 - 2\frac{\sigma_{\min}^2(A)}{\|A\|_F^2} \right)^2 \right)^k \|x^0 - x_*^0\|_2^2,
\end{equation}
where \( x_*^0 := A^\dagger b + (I - A^\dagger A)x^0 \) denotes the projection of \( x^0 \) onto the solution set \( \{x \in \mathbb{R}^n \mid Ax = b\} \).   Throughout this paper, we assume that \( \operatorname{rank}(A) \geq 2 \). This assumption is necessary because, as shown in Figure~\ref{figueRDR}, when \( \operatorname{rank}(A) = 1 \), the DR iteration reduces to repeated reflections across the same hyperplane, which leads to a stationary sequence \( x^0 = x^1 = \cdots = x^k \). In such cases, the method fails to make progress. In fact, when \( \operatorname{rank}(A) = 1 \), the solution can be obtained in a single projection step.
	
	\subsection{The heavy ball momentum method}
	
Consider the unconstrained minimization problem
\[
\min_{x \in \mathbb{R}^n} f(x),
\]
where \( f \) is a differentiable convex function.  
Gradient descent (GD) is a fundamental method for solving this problem, with the iterative update
\[
x^{k+1} = x^k - \alpha \nabla f(x^k),
\]
where \( \alpha > 0 \) is the stepsize.
To accelerate convergence, Polyak \cite{polyak1964some} introduced a momentum term \( \beta(x^k - x^{k-1}) \), leading to the heavy ball method:
\[
x^{k+1} = x^k - \alpha \nabla f(x^k) + \beta(x^k - x^{k-1}).
\]
Motivated by this idea, momentum techniques have been successfully incorporated into the RDR framework \cite{han2024randomized}. Although the momentum variant of RDR can achieve linear convergence with suitable choices of the stepsize \( \alpha \) and momentum parameter \( \beta \) \cite[Theorem 4.1]{han2024randomized}, these parameters are often problem-specific and require careful tuning,  limiting the robustness and ease of use of the method. In this paper,  we develop an adaptive momentum RDR variant inspired by recent work \cite{zeng2024adaptive,sun2025connecting}, where both \(\alpha\) and \(\beta\) are dynamically adjusted at each iteration based on historical information from previous iterates.

\section{RDR with improved sampling strategies}
\label{section3}


In this section, we present two improved sampling strategies that aim to enhance the efficiency and convergence behavior of the RDR method. The first is without-replacement sampling, which avoids selecting the same index more than once within a single iteration. The second is volume sampling, which leverages geometric information to promote diversity in the selected subsets.

\textbf{Without-replacement sampling.} At the $k$-th iteration, the indices $i_{k_1}$ and $i_{k_2}$ are selected according to the following scheme:
\[
\boxed{
	\text{\bf Strategy I:} \quad
	\mathbb{P}(i_{k_1} = i) = \frac{\Vert a_i \Vert_2^2}{\Vert A \Vert_F^2}, \quad
	\mathbb{P}(i_{k_2} = j) = \frac{\Vert a_j \Vert_2^2}{\Vert A \Vert_F^2 - \Vert a_{i_{k_1}} \Vert_2^2} (j\neq i_{k_1}).
}
\]
This strategy effectively avoids repeatedly selecting the same reflection hyperplane, particularly when there is a significant disparity in the row norms of matrix~$A$, thereby improving the algorithm's overall efficiency.

\textbf{Volume sampling.} In this approach, we select a pair of indices $\mathcal{S}_k = \{ i_{k_1}, i_{k_2} \}$ according to the volume sampling distribution:
\[
\boxed{
	\text{\bf Strategy II:} \quad \mathcal{S}_k \sim \operatorname{Vol}_2(AA^\top).
}
\]
Volume sampling favors subsets that span a larger volume, thus encouraging the selection of geometrically diverse directions, which can lead to faster convergence in practice.

     The canonical DR algorithm has inspired numerous modifications and relaxations in the literature. One particularly important and widely used variant is the generalized Douglas-Rachford method, introduced by Eckstein and Bertsekas~\cite{Eck92}, which incorporates a relaxation parameter $\alpha \in (0,1)$:
    \[
    x^{k+1} = \left((1 - \alpha) I + \alpha \mathcal{R}_{\mathcal{H}_{i_{k_2}}} \mathcal{R}_{\mathcal{H}_{i_{k_1}}} \right)(x^k).
    \]
    This formulation reduces to the standard DR method when $\alpha = \frac{1}{2}$. The introduction of the relaxation parameter $\alpha$ is significant, as it can lead to practical acceleration of convergence, especially when $\alpha > \frac{1}{2}$~\cite{strohmer2009randomized,han2024randomized}. In this paper, we also consider the use of relaxation, and we present our practical RDR method in Algorithm~\ref{alg:PRDR}.

	
	\begin{algorithm}[htpb]
		\caption{Practical randomized Douglas-Rachford (PRDR) method}
		\label{alg:PRDR}
		\begin{algorithmic}
			\Require 
			$A \in \mathbb{R}^{m\times n}, b \in \mathbb{R}^{m}, k=0, $
			 relaxation parameter $\alpha \in (0,1)$, and initial vector $x^0 \in \mathbb{R}^{m}$.
			\begin{enumerate}
			\item[1:] Select $\{i_{k_1},i_{k_2}\}$ according to strategy I or strategy II. 
			
			\item[2:] Compute 
			$$
			\left\{
			\begin{array}{l}
				y^k =\mathcal{R}_{\mathcal{H}_{i_{k_1}} }(x^k)  
					=x^k - 2\frac{\langle a_{i_{k_1}}, x^k \rangle - b_{i_{k_1}}}{\Vert a_{i_{k_1}} \Vert_2^2} a_{i_{k_1}},\\
				 z^k =\mathcal{R}_{\mathcal{H}_{i_{k_2}} }(y^k)  = y^k - 2\frac{\langle a_{i_{k_2}}, y^k \rangle - b_{i_{k_2}}}{\Vert a_{i_{k_2}} \Vert_2^2} a_{i_{k_2}}.
			\end{array}\right.
			$$
			
			\item[3:] Update $$ 
			x^{k+1} =(1-\alpha)x^k+\alpha z^k.
			$$
			
			\item[4:] If the stopping rule is satisfied, stop, and go to output. Otherwise, set $k = k + 1$, and return to step 1.

            \end{enumerate}
			
			\Ensure  
			The approximate solution $x^k$.
		\end{algorithmic}
	\end{algorithm}
	
\subsection{Convergence Analysis}

To establish the convergence of Algorithm~\ref{alg:PRDR}, we begin by introducing some notations.   Define the constant
\(
\Delta \coloneqq \sum_{j=1}^m \frac{\|a_j\|_2^2}{\|A\|_F^2 - \|a_j\|_2^2},
\)
and consider the matrix \( M \in \mathbb{R}^{m \times m} \) defined as
\begin{equation}\label{definition-M}
M := (\Delta + 1)I - 
\begin{bmatrix}
	\frac{\|a_1\|_2^2}{\|A\|_F^2 - \|a_1\|_2^2} & \frac{2\langle a_1, a_2 \rangle}{\|A\|_F^2 - \|a_2\|_2^2} & \cdots & \frac{2\langle a_1, a_m \rangle}{\|A\|_F^2 - \|a_m\|_2^2} \\
	\frac{2\langle a_1, a_2 \rangle}{\|A\|_F^2 - \|a_1\|_2^2} & \frac{\|a_2\|_2^2}{\|A\|_F^2 - \|a_2\|_2^2} & \cdots & \frac{2\langle a_2, a_m \rangle}{\|A\|_F^2 - \|a_m\|_2^2} \\
	\vdots & \vdots & \ddots & \vdots \\
	\frac{2\langle a_1, a_m \rangle}{\|A\|_F^2 - \|a_1\|_2^2} & \frac{2\langle a_2, a_m \rangle}{\|A\|_F^2 - \|a_2\|_2^2} & \cdots & \frac{\|a_m\|_2^2}{\|A\|_F^2 - \|a_m\|_2^2}
\end{bmatrix}.
\end{equation}
For any \( i, j \in [m] \), we define
$
g_{i,j} := 1 - \frac{\langle a_i, a_j \rangle^2}{\|a_i\|_2^2 \|a_j\|_2^2},
$
and construct the matrix \( N \in \mathbb{R}^{m \times m} \) as
\begin{equation}\label{definition-N}
N := 
\begin{bmatrix}
	\sum\limits_{j=1}^m g_{1,j} \|a_j\|_2^2 & -g_{1,2} \langle a_1, a_2 \rangle & \cdots & -g_{1,m} \langle a_1, a_m \rangle \\
	-g_{2,1} \langle a_2, a_1 \rangle & \sum\limits_{j=1}^m g_{2,j} \|a_j\|_2^2 & \cdots & -g_{2,m} \langle a_2, a_m \rangle \\
	\vdots & \vdots & \ddots & \vdots \\
	-g_{m,1} \langle a_m, a_1 \rangle & -g_{m,2} \langle a_m, a_2 \rangle & \cdots & \sum\limits_{j=1}^m g_{m,j} \|a_j\|_2^2
\end{bmatrix}.
\end{equation}

We now state a key property of the matrices \( M \) and \( N \) defined above. All proofs in this section are deferred to Appendix~\ref{appendix-P}.

\begin{proposition}\label{propMN}
	Suppose that \( \operatorname{rank}(A) \geq 2 \), and let \( M \) and \( N \) be defined as in \eqref{definition-M} and \eqref{definition-N}, respectively. Then both \( M \) and \( N \) are positive definite. 
\end{proposition}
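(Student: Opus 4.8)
The plan is to prove positive definiteness by exhibiting, for each matrix, a sum-of-squares representation of its quadratic form and then using $\operatorname{rank}(A)\ge 2$ to rule out nontrivial zeros. Since a quadratic form sees only the symmetric part of a matrix, I read "positive definite" as $x^\top M x>0$ and $x^\top N x>0$ for all nonzero $x\in\mathbb{R}^m$; this is the relevant notion because $N$ is symmetric but $M$, whose $(i,j)$ entry carries the denominator $\|A\|_F^2-\|a_j\|_2^2$, is not. Throughout I use that every row is nonzero (so the reflection operators and the quantities $g_{i,j}$ are well defined) and abbreviate $c_i:=\|A\|_F^2-\|a_i\|_2^2$. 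The key observation is that the common building block is
\[
\|x_j a_i - x_i a_j\|_2^2 = x_j^2\|a_i\|_2^2 - 2 x_i x_j\langle a_i,a_j\rangle + x_i^2\|a_j\|_2^2,
\]
whose coefficients match exactly the diagonal and off-diagonal pattern of $M$ and $N$.

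I would first handle $N$, which is symmetric. Expanding $x^\top N x$ and pairing the contributions of $(i,j)$ and $(j,i)$ via $g_{i,j}=g_{j,i}$ and $g_{i,i}=0$ should yield
\[
x^\top N x = \sum_{i<j} g_{i,j}\,\|x_j a_i - x_i a_j\|_2^2 .
\]
For $M$ the displayed matrix is not symmetric, so rather than completing a square I would verify the identity
\[
x^\top M x = \sum_{i\ne j}\frac{1}{c_i}\,\|x_j a_i - x_i a_j\|_2^2
\]
by matching coefficients. Collecting the coefficient of $x_k^2$ gives $\frac{1}{c_k}\sum_{j\ne k}\|a_j\|_2^2=1$ from the terms with $i=k$, plus $\sum_{i\ne k}\frac{\|a_i\|_2^2}{c_i}=\Delta-\frac{\|a_k\|_2^2}{c_k}$ from the terms with $j=k$, reproducing $M_{kk}=\Delta+1-\frac{\|a_k\|_2^2}{c_k}$; and the coefficient of $x_kx_l$ becomes $-2\langle a_k,a_l\rangle\bigl(\frac{1}{c_k}+\frac{1}{c_l}\bigr)$, which equals $M_{kl}+M_{lk}$. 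The asymmetric denominators reconcile precisely because summing over ordered pairs $i\ne j$ symmetrizes the weight $1/c_i$. Since $\operatorname{rank}(A)\ge 2$ guarantees at least two nonzero rows, every $c_i>0$, and $g_{i,j}\ge 0$ always, both representations are manifestly nonnegative.

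It remains to upgrade nonnegativity to strict positivity. Suppose $x^\top M x=0$; I claim $x=0$. Fix any index $k$. If every other row were parallel to $a_k$, all rows would lie in $\operatorname{span}(a_k)$ and $A$ would have rank at most one, contradicting $\operatorname{rank}(A)\ge 2$; hence there is an index $j$ with $\{a_k,a_j\}$ linearly independent. The summand $\frac{1}{c_k}\|x_j a_k - x_k a_j\|_2^2$ carries the strictly positive weight $1/c_k$, so it must vanish, forcing $x_j a_k - x_k a_j=0$; linear independence then gives $x_k=x_j=0$, and in particular $x_k=0$. As $k$ was arbitrary, $x=0$. The argument for $N$ is identical, with $1/c_k$ replaced by $g_{k,j}$ and noting that $\{a_k,a_j\}$ linearly independent is exactly the condition $g_{k,j}>0$.

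The main obstacle is discovering the two sum-of-squares identities; once the building block $\|x_j a_i - x_i a_j\|_2^2$ is identified, everything else is bookkeeping together with a short rank argument. The genuinely delicate point is the non-symmetry of $M$: a direct attempt to complete the square in $v=\sum_i x_i a_i$ leaves a residual term of the wrong sign, and the clean representation surfaces only after the ordered-pair summation averages $1/c_i$ and $1/c_j$. For this reason I would present the $M$ identity as a verified coefficient match rather than through an ad hoc completion of squares.
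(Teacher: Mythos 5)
Your proof is correct, and at its core it rests on the same decomposition as the paper's: both reduce $x^\top M x$ and $x^\top N x$ to a sum over pairs $i<j$ of a nonnegative weight ($\tfrac{1}{c_i}+\tfrac{1}{c_j}$ with $c_i=\|A\|_F^2-\|a_i\|_2^2$, resp.\ $g_{i,j}$) times the quadratic $\|a_j\|_2^2x_i^2+\|a_i\|_2^2x_j^2-2\langle a_i,a_j\rangle x_ix_j$, and your coefficient matching reproduces exactly the paper's regrouping identities (including the same use of $\sum_{j\neq i}\|a_j\|_2^2=c_i$ and the pairing of the asymmetric denominators of $M$). Where you genuinely diverge is in how positivity is extracted from this decomposition. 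The paper does not observe that the pairwise term is literally $\|x_ia_j-x_ja_i\|_2^2$; instead it lower-bounds it via the Cauchy--Schwarz inequality and then runs a case analysis over the sign pattern of $x_ix_j$ (its sets $\mathcal{J}_+,\mathcal{J}_-$) and the support of $x$ (its sets $\mathcal{I},\mathcal{I}^c$), concluding that equality forces the rows indexed by $\mathcal{I}$ to be collinear and the remaining rows to vanish, contradicting $\operatorname{rank}(A)\geq 2$. Your perfect-square identification makes nonnegativity immediate and turns the equality analysis into one clean step: for each $k$ choose $j$ with $\{a_k,a_j\}$ linearly independent (possible since $\operatorname{rank}(A)\geq 2$), so the vanishing of the corresponding summand, whose weight $1/c_k$ (resp.\ $g_{k,j}$) is strictly positive, forces $x_k=0$. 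What the paper's heavier bookkeeping buys is that, for $M$, it accommodates zero rows without comment, whereas your argument excludes them upfront; this costs nothing in substance, since zero rows make $g_{i,j}$ (and the RDR sampling itself) ill-defined, so your standing assumption is forced by the statement anyway. Finally, your observation that positive definiteness must be read through the quadratic form because $M$ is not symmetric is a correct clarification of a point the paper glosses over, both in its definition of positive definiteness (stated only for symmetric matrices) and in its later use of $M^{1/2}$.
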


We present the convergence result for Algorithm~\ref{alg:PRDR}.

	\begin{theorem}\label{main-PRDR}
		Suppose that  the linear system $Ax=b$ is consistent, $\alpha\in(0,1)$, $\operatorname{rank}\left(A\right)\geq2$, and $x^0\in\mathbb{R}^n$ is an arbitrary initial vector.
		Let $x_*^0=A^{\dagger}b+(I-A^\dagger A)x^0$.
		Then the iteration sequence $\{x^k\}_{k\geq0}$ generated by Algorithm \ref{alg:PRDR} using strategy I satisfies
		$$\mathbb{E} [ \|x^k-x^{0}_*\|^2_2]
		\leq
		\left(1-4\alpha(1-\alpha)\frac{\sigma_{\min}^2(M^{\frac{1}{2}}A)}{\|A\|^2_F}
		\right)^k\|x^0-x^{0}_*\|^2_2,
		$$
		and the iteration sequence $\{x^k\}_{k\geq0}$ generated by Algorithm \ref{alg:PRDR} using strategy II satisfies
		$$
		\mathop{\mathbb{E}}\big[\|x^{k}-x^{0}_*\|^2_2] 
		\leq
		\left(
		1-8\alpha(1-\alpha)
		\frac{\sigma_{\min}^2(N^{\frac{1}{2}}A)} 
		{ \Vert A \Vert_F^4 -\Vert AA^{\top} \Vert_F^2} 
		\right)^{k}
		\|x^0-x^{0}_*\|^2_2,
		$$
		where $M$ and $N$ are given by \eqref{definition-M} and \eqref{definition-N}, respectively.
	\end{theorem}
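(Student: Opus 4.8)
The plan is to derive a one-step contraction of the form $\mathbb{E}[\|x^{k+1}-x_*^0\|_2^2\mid x^k] \le c\,\|x^k-x_*^0\|_2^2$ and then iterate by taking total expectations. Write $x_* := x_*^0$ and $r^k := x^k - x_*$. First I would record two structural facts. Since the system is consistent and $x_* = A^\dagger b + (I-A^\dagger A)x^0$, one checks $Ax_* = b$, so $x_*\in\mathcal{H}_i$ for every $i$ and hence each reflection fixes $x_*$; moreover each $\mathcal{R}_{\mathcal{H}_i}$ is an affine isometry whose linear part is the Householder matrix $R_i = I - 2a_ia_i^\top/\|a_i\|_2^2$. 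Consequently $\mathcal{R}_{\mathcal{H}_i}(x)-x_* = R_i(x-x_*)$, and since each $R_i$ maps $\operatorname{Range}(A^\top)$ into itself (because $a_i = A^\top e_i$) while $r^0 = A^\dagger(Ax^0-b)\in\operatorname{Range}(A^\top)$, an easy induction shows $r^k\in\operatorname{Range}(A^\top)$ for all $k$. This membership is what later lets me pass from the full quadratic form to its smallest \emph{nonzero} eigenvalue.

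Second, I would exploit the isometry. Reflections preserve distance to the fixed point, so $\|z^k-x_*\|_2 = \|x^k-x_*\|_2$; applying $\|(1-\alpha)u+\alpha v\|_2^2 = (1-\alpha)\|u\|_2^2 + \alpha\|v\|_2^2 - \alpha(1-\alpha)\|u-v\|_2^2$ with $u = x^k-x_*$, $v = z^k-x_*$ collapses the first two terms into the exact recursion
\[
\|x^{k+1}-x_*\|_2^2 = \|x^k-x_*\|_2^2 - \alpha(1-\alpha)\,\|x^k-z^k\|_2^2 .
\]
Thus everything reduces to lower bounding $\mathbb{E}[\|x^k-z^k\|_2^2]$. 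Writing $z^k-x_* = R_{i_{k_2}}R_{i_{k_1}}r^k$ and using $R_i^2=I$, a short computation gives $(I-R_{i_{k_2}}R_{i_{k_1}})^\top(I-R_{i_{k_2}}R_{i_{k_1}}) = 2I - R_{i_{k_2}}R_{i_{k_1}} - R_{i_{k_1}}R_{i_{k_2}}$, which expands via $R_i = I - 2Q_i$, $Q_i := a_ia_i^\top/\|a_i\|_2^2$, into the explicit scalar form
\[
\|x^k-z^k\|_2^2 = 4\left[\frac{\langle a_{i_{k_1}},r^k\rangle^2}{\|a_{i_{k_1}}\|_2^2} + \frac{\langle a_{i_{k_2}},r^k\rangle^2}{\|a_{i_{k_2}}\|_2^2} - \frac{2\langle a_{i_{k_1}},a_{i_{k_2}}\rangle\langle a_{i_{k_1}},r^k\rangle\langle a_{i_{k_2}},r^k\rangle}{\|a_{i_{k_1}}\|_2^2\,\|a_{i_{k_2}}\|_2^2}\right],
\]
which is symmetric in the index pair, so the order of the two reflections is immaterial for this quantity.

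The computational heart, and the step I expect to be the main obstacle, is averaging this expression over each sampling rule and recognizing the result as the quadratic form built from $M$ or $N$. Setting $w := Ar^k$, for Strategy~I I would substitute the joint law $\mathbb{P}(i_{k_1}=i,i_{k_2}=j) = \tfrac{\|a_i\|_2^2}{\|A\|_F^2}\cdot\tfrac{\|a_j\|_2^2}{\|A\|_F^2-\|a_i\|_2^2}$, split into three sums, and use $\sum_{j\neq i}\|a_j\|_2^2 = \|A\|_F^2-\|a_i\|_2^2$ together with the definition of $\Delta$ to collapse them; relabeling $i\leftrightarrow j$ symmetrizes the off-diagonal denominators and identifies the total as $\mathbb{E}[\|x^k-z^k\|_2^2] = \tfrac{4}{\|A\|_F^2}w^\top M w = \tfrac{4}{\|A\|_F^2}(r^k)^\top A^\top M A\,r^k$. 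For Strategy~II I would use that for a pair $\{i,j\}$ the $2\times2$ Gram determinant is $\det(A_{\{i,j\}}A_{\{i,j\}}^\top) = \|a_i\|_2^2\|a_j\|_2^2\, g_{i,j}$, and that the normalizing sum of all $2\times2$ principal minors of $AA^\top$ is the second elementary symmetric function of its eigenvalues, namely $\tfrac12\big[(\operatorname{tr}AA^\top)^2 - \operatorname{tr}((AA^\top)^2)\big] = \tfrac12(\|A\|_F^4 - \|AA^\top\|_F^2)$. Matching the resulting weighted sum term-by-term against the entries of $N$ (recalling $g_{i,i}=0$) gives $\mathbb{E}[\|x^k-z^k\|_2^2] = \tfrac{8}{\|A\|_F^4-\|AA^\top\|_F^2}(r^k)^\top A^\top N A\,r^k$. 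Finally, since $M,N\succ0$ by Proposition~\ref{propMN}, the matrices $A^\top M A$ and $A^\top N A$ have null space exactly $\operatorname{Null}(A)$, so their smallest nonzero eigenvalues equal $\sigma_{\min}^2(M^{1/2}A)$ and $\sigma_{\min}^2(N^{1/2}A)$; since $r^k\in\operatorname{Range}(A^\top)=\operatorname{Null}(A)^\perp$, I may bound the quadratic forms below by these values times $\|r^k\|_2^2$. Substituting into the recursion, taking conditional then total expectation, and unrolling over $k$ yields the two stated linear rates.
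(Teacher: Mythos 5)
Your proposal is correct, and it reaches both bounds through the same overall skeleton as the paper---a one-step conditional contraction obtained by averaging the effect of the double reflection, followed by the positive-definiteness (Proposition~\ref{propMN}) and range-space argument---but you package the two key steps genuinely differently. The paper keeps the cross term $\langle z^k-x_*^0,\,x^k-x_*^0\rangle$ (Lemma~\ref{lemma-22}) and evaluates its expectation through an operator-valued identity (Lemma~\ref{exp-reflection-operator}), namely $\mathbb{E}[T_{\mathcal{H}_j}T_{\mathcal{H}_i}]=I-\frac{2}{\|A\|_F^2}A^\top MA$ for Strategy I and $I-\frac{4}{\|A\|_F^4-\|AA^\top\|_F^2}A^\top NA$ for Strategy II, proved by matrix bookkeeping with several auxiliary diagonal matrices; it also imports the membership $x^k-x_*^0\in\operatorname{Range}(A^\top)$ from prior work (Lemma~\ref{lemma-23}). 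You instead collapse the update into the exact recursion $\|x^{k+1}-x_*^0\|_2^2=\|x^k-x_*^0\|_2^2-\alpha(1-\alpha)\|x^k-z^k\|_2^2$ and average the explicit scalar expression for $\|x^k-z^k\|_2^2$ directly. The two computations are equivalent---indeed $\|x^k-z^k\|_2^2=2\|r^k\|_2^2-2(r^k)^\top T_{\mathcal{H}_{i_{k_2}}}T_{\mathcal{H}_{i_{k_1}}}r^k$, so your sums are exactly the paper's with the quadratic form already applied---but your route is more elementary: it avoids the operator-level manipulations, and the symmetry of your scalar expression in the index pair makes both the $i\leftrightarrow j$ relabeling needed for Strategy I and the unordered-pair structure of volume sampling (via $\det(A_{\{i,j\}}A_{\{i,j\}}^\top)=\|a_i\|_2^2\|a_j\|_2^2\,g_{i,j}$ and the normalization $\frac{1}{2}(\|A\|_F^4-\|AA^\top\|_F^2)$) transparent. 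What the paper's organization buys is a standalone, iterate-independent lemma whose one-step consequence is reused in the momentum analysis (the proof of Theorem~\ref{main-AmPRDR} invokes the bound of Theorem~\ref{main-PRDR} applied to $\tilde{x}^{k+1}$); your scalar computation would serve there equally well. All of your intermediate claims check out: the convexity identity, the isometry giving $\|z^k-x_*^0\|_2=\|x^k-x_*^0\|_2$, the induction giving $r^k\in\operatorname{Range}(A^\top)$ from $r^0=A^\dagger(Ax^0-b)$, the identification of the averaged forms with $\frac{4}{\|A\|_F^2}w^\top Mw$ and $\frac{8}{\|A\|_F^4-\|AA^\top\|_F^2}w^\top Nw$ (using $g_{i,i}=0$ for the diagonal of $N$), and the passage to $\sigma_{\min}^2(M^{1/2}A)$ and $\sigma_{\min}^2(N^{1/2}A)$ on $\operatorname{Null}(A)^\perp$.
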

	
We now compare the convergence upper bound established in Theorem~\ref{main-PRDR} with the one given in \eqref{xie-RDR-rate}. Due to the structure of the matrices \( M \) and $N$, it is only feasible to analyze the convergence bound for certain special forms of \( A \).
To begin, we assume that the matrix \( A \) is row-normalized so that \( \|a_i\|_2 = 1 \) for all \( i \in [m] \). Under this assumption, the matrix \( M \) becomes
\[
M = \frac{2\|A\|_F^2}{\|A\|_F^2 - 1}I - \frac{2}{\|A\|_F^2 - 1}AA^{\top}.
\]
To estimate \( \sigma_{\min}(M^{1/2}A) \), we consider the matrix \( A^\top M A \). Suppose that \( A = U \Sigma V^\top \) is the singular value decomposition of \( A \), where the singular values satisfy \( \sigma_1(A) \geq \cdots \geq \sigma_r(A) = \sigma_{\min}(A) > 0 \), and \( r = \operatorname{rank}(A) \). Then we have
\[
A^\top M A = \frac{2}{\|A\|_F^2 - 1} \left( \|A\|_F^2 V \Sigma^\top \Sigma V^\top - V \Sigma^\top \Sigma \Sigma^\top \Sigma V^\top \right).
\]
It follows that the nonzero eigenvalues of \( A^\top M A \) are given by
\[
\frac{2}{\|A\|_F^2 - 1} \left( \|A\|_F^2 \sigma_1^2(A) - \sigma_1^4(A) \right), \ldots, \frac{2}{\|A\|_F^2 - 1} \left( \|A\|_F^2 \sigma_r^2(A) - \sigma_r^4(A) \right).
\]
We conclude that the smallest nonzero eigenvalue of \( A^\top M A \) is
$
\frac{2}{\|A\|_F^2 - 1} \left( \|A\|_F^2 \sigma_r^2(A) - \sigma_r^4(A) \right).
$
To verify this, consider any \( i \in [r] \), then
\[
\begin{aligned}
	& \frac{2}{\|A\|_F^2 - 1} \left( \|A\|_F^2 \sigma_r^2(A) - \sigma_r^4(A) \right) - \frac{2}{\|A\|_F^2 - 1} \left( \|A\|_F^2 \sigma_i^2(A) - \sigma_i^4(A) \right) \\
	=\, & \frac{2}{\|A\|_F^2 - 1} \left( \sigma_r^2(A) - \sigma_i^2(A) \right) \left( \|A\|_F^2 - \sigma_r^2(A) - \sigma_i^2(A) \right) \\
	\leq\, & 0,
\end{aligned}
\]
where the inequality holds because \( \sigma_r^2(A) \leq \sigma_i^2(A) \) and
$
\|A\|_F^2 - \sigma_r^2(A) - \sigma_i^2(A) \geq 0.
$
Hence, under the assumption that \( A \) is row-normalized and the relaxation parameter is set to \( \alpha = \frac{1}{2} \), Theorem~\ref{main-PRDR} implies that Algorithm~\ref{alg:PRDR} with strategy I satisfies
\[
\mathbb{E} \left[ \|x^k - x_*^0\|_2^2 \right]
\leq
\left(1 - 2\frac{\sigma_{\min}^2(A)}{\|A\|_F^2 - 1} \left(1 - \frac{\sigma_{\min}^2(A)}{\|A\|_F^2} \right) \right)^k \|x^0 - x_*^0\|_2^2.
\]
Comparing this upper bound to that in \eqref{xie-RDR-rate}, we obtain
\[
1 - 2\frac{\sigma_{\min}^2(A)}{\|A\|_F^2 - 1} \left(1 - \frac{\sigma_{\min}^2(A)}{\|A\|_F^2} \right)
\leq
1 - 2\frac{\sigma_{\min}^2(A)}{\|A\|_F^2}\left(1 - \frac{\sigma_{\min}^2(A)}{\|A\|_F^2} \right).
\]
This confirms that Algorithm~\ref{alg:PRDR} with strategy I achieves a sharper convergence upper bound under the given assumption.
	
Next, suppose that \( A \) is row-normalized and, in addition, the rows are orthogonal, i.e., \( \langle a_i, a_j \rangle = 0 \) for any \( i, j \in [m] \) with \( i \neq j \). In this case, the matrix \( N \) becomes
\[
N = (m - 1) I,
\]
and we have \( \|A\|_F^4 = m^2 \), \( \|AA^\top\|_F^2 = m \), and \( \|A\|_F^2 = m \). Under this assumption and with relaxation parameter \( \alpha = \frac{1}{2} \), Theorem~\ref{main-PRDR} implies that Algorithm~\ref{alg:PRDR} with strategy II satisfies
\[
\mathbb{E} \left[ \|x^k - x_*^0\|_2^2 \right]
\leq
\left(1 - 2\frac{\sigma_{\min}^2(A)}{\|A\|_F^2} \right)^k \|x^0 - x_*^0\|_2^2.
\]
Comparing this bound to that in \eqref{xie-RDR-rate}, we conclude that Algorithm~\ref{alg:PRDR} with strategy II also achieves a tighter convergence upper bound under the specified conditions.

	\section{Acceleration by adaptive heavy ball momentum}
\label{section4}

In this section, building on the momentum variant of the RDR (mRDR) method \cite{han2024randomized}, we address its key limitations by developing an adaptive scheme that automatically tunes both the relaxation parameter and the momentum coefficient. At the \(k\)-th iteration (\(k \geq 1\)), the original mRDR method updates according to
\[
x^{k+1} = \left((1 - \alpha) I + \alpha \mathcal{R}_{\mathcal{H}_{i_{k_2}}} \mathcal{R}_{\mathcal{H}_{i_{k_1}}} \right)(x^k) + \beta(x^k - x^{k-1}),
\]
where \(\alpha\) and \(\beta\) are fixed hyperparameters. While this scheme can achieve linear convergence under suitable choices of \(\alpha\) and \(\beta\) \cite[Theorem 4.1]{han2024randomized}, its performance is highly sensitive to these parameters, which are often problem-specific and require manual tuning.

It follows from Theorem~\ref{main-PRDR} that Algorithm~\ref{alg:PRDR} converges linearly in expectation to the solution  
$
x_*^0 = A^{\dagger}b + (I - A^\dagger A)x^0.
$
Based on this result, we aim to choose the parameters \(\alpha_k\) and \(\beta_k\) at each iteration so as to minimize the error \(\|x^{k+1} - x_*^0\|_2\). This leads to the following constrained optimization problem:
\begin{equation}\label{AS-pro1}
	\begin{aligned}
		&\min_{\alpha, \beta \in \mathbb{R}} \quad  \|x - x_*^0\|_2^2 \\
		\text{subject to} \quad & x = \left((1 - \alpha) I + \alpha \mathcal{R}_{\mathcal{H}_{i_{k_2}}} \mathcal{R}_{\mathcal{H}_{i_{k_1}}} \right)(x^k) + \beta(x^k - x^{k-1})\\
		&\ \ =x^k-\alpha(x^k-z^k)+ \beta(x^k - x^{k-1}),
	\end{aligned}
\end{equation}
where $z^k= \mathcal{R}_{\mathcal{H}_{i_{k_2}}} \mathcal{R}_{\mathcal{H}_{i_{k_1}}} (x^k) $. 
Define
$$	
u_k := \frac{\langle a_{i_{k_1}}, x^k \rangle - b_{i_{k_1}}}{\|a_{i_{k_1}}\|_2^2} \ \ \text{and} \ \	v_k := \frac{\langle a_{i_{k_2}}, x^k \rangle - b_{i_{k_2}} - 2 \langle a_{i_{k_2}}, a_{i_{k_1}} \rangle u_k}{\|a_{i_{k_2}}\|_2^2}.
$$
Then we have 
$$z^k = x^k - 2 u_ka_{i_{k_1}} - 2 v_ka_{i_{k_2}}.$$
Hence, \eqref{AS-pro1} can be rewritten as
	\begin{equation}\label{AS-pro2}
		\begin{aligned}
			\min\limits_{ \alpha,\beta\in\mathbb{R}}& \ \ \|x-x_*^0\|_2^2\\
			\text{subject to} \ \ x = x^k - & 2\alpha(u_k a_{i_{k_1}} + v_k a_{i_{k_2}} )  
			+ \beta (x^k - x^{k-1}).
		\end{aligned}
	\end{equation}
If $\|x^k - x^{k-1}\|_2^2  \|u_k a_{i_{k_1}} + v_k a_{i_{k_2}}\|_2^2 -  \langle u_k a_{i_{k_1}} + v_k a_{i_{k_2}}, x^k - x^{k-1} \rangle^2 \neq 0$, then the minimizers of \eqref{AS-pro2} are given by
	\begin{equation}\label{Parameter1}
		\left\{\begin{array}{ll}
			\alpha_k
			=
			\frac{ \|x^k - x^{k-1}\|_2^2 \left( u_k \langle x^k - x_*^0, a_{i_{k_1}} \rangle + v_k \langle x^k - x_*^0, a_{i_{k_2}} \rangle\right) -  \langle x^k - x_*^0, x^k - x^{k-1} \rangle \langle u_k a_{i_{k_1}} + v_k a_{i_{k_2}}, x^k - x^{k-1} \rangle} 
			{2\left( \|x^k - x^{k-1}\|_2^2  \|u_k a_{i_{k_1}} + v_k a_{i_{k_2}}\|_2^2 -  \langle u_k a_{i_{k_1}} + v_k a_{i_{k_2}}, x^k - x^{k-1} \rangle^2 \right)},
			\\
			\beta_k
			=
			\frac{\langle u_k a_{i_{k_1}} + v_k a_{i_{k_2}}, x^k - x^{k-1} \rangle \left( u_k \langle x^k - x_*^0,  a_{i_{k_1}}\rangle + v_k \langle x^k - x_*^0,  a_{i_{k_2}}\rangle \right)
				-   
				\langle x^k - x_*^0, x^k - x^{k-1} \rangle \|u_k a_{i_{k_1}} + v_k a_{i_{k_2}}\|_2^2} 
			{ \|x^k - x^{k-1}\|_2^2  \|u_k a_{i_{k_1}} + v_k a_{i_{k_2}}\|_2^2 -  \langle u_k a_{i_{k_1}} + v_k a_{i_{k_2}}, x^k - x^{k-1} \rangle^2}, 
		\end{array}
		\right.
	\end{equation}
	which seem to be intractable since $  \langle x^k - x_*^0,  a_{i_{k_1}}\rangle$, $ \langle x^k - x_*^0,  a_{i_{k_2}}\rangle$ and $\langle x^k-x_*^0 ,x^k - x^{k-1} \rangle$ include an unknown vector $x_*^0$.  In fact, we can calculate the three terms directly to get rid of $x_*^0$.
	Noting that $AA^\dagger b = b$ and $x_*^0=A^\dagger b+(I-A^\dagger A)x^0$, so we have $Ax_*^0 = b$.
	Thus, for every $i\in [m]$,\  $\langle a_i,x_*^0 \rangle = a_i^{\top}x_*^0 =  b_i$. Then we know that
	\begin{equation}\label{kill-unknown}
		\langle x^k - x_*^0,  a_{i}\rangle 
		= \langle x^k,  a_{i}\rangle - \langle x_*^0,a_{i} \rangle
		= \langle x^k,  a_{i}\rangle - b_{i}.
        \nonumber
	\end{equation}
	On the other hand, we can discern from the problem \eqref{AS-pro2} that $x^k$ is the orthogonal projection of $x_*^0$ onto the affine set
	$$
\begin{aligned}
	\Pi_{k-1}&:=x^{k-1}+\mbox{Span}\{ u_{k-1} a_{i_{(k-1)_1}} + v_{k-1} a_{i_{(k-1)_2}}, x^{k-1} - x^{k-2} \}\\
	&=x^{k-1}+\operatorname{Span}\{ x^{k-1}-z^{k-1}, x^{k-1} - x^{k-2} \},
\end{aligned}
	$$
	hence we have
	$$\langle x^k-x_*^0 , x^k - x^{k-1} \rangle=0.$$
	A geometric interpretation is presented in Figure \ref{GI1}. Therefore,  \eqref{Parameter1} can be simplified to
	\begin{equation}\label{Parameter2}
		\left\{\begin{array}{ll}
			\alpha_k
			=
			\frac{ \|x^k - x^{k-1}\|_2^2 \left( \langle  u_k a_{i_{k_1}} + v_k a_{i_{k_2}} , x^k \rangle  -  \left( u_k b_{i_{k_1}}+ v_k b_{i_{k_2}}\right) \right) } 
			{2\left( \|x^k - x^{k-1}\|_2^2  \|u_k a_{i_{k_1}} + v_k a_{i_{k_2}}\|_2^2 -  \langle u_k a_{i_{k_1}}+ v_k a_{i_{k_2}}, x^k - x^{k-1} \rangle^2 \right)}, 
			\\
			\beta_k
			=
			\frac{\langle u_k a_{i_{k_1}} + v_k a_{i_{k_2}}, x^k - x^{k-1} \rangle\left( \langle  u_k a_{i_{k_1}} + v_k a_{i_{k_2}} , x^k \rangle  -  \left( u_k b_{i_{k_1}} + v_k b_{i_{k_2}} \right) \right) } 
			{ \|x^k - x^{k-1}\|_2^2  \|u_k a_{i_{k_1}} + v_k a_{i_{k_2}}\|_2^2 -  \langle u_k a_{i_{k_1}} + v_k a_{i_{k_2}}, x^k - x^{k-1} \rangle^2}. 
		\end{array}
		\right.
	\end{equation}
	To sum up, for any $k\geq1$, if $\|x^k - x^{k-1}\|_2^2  \|u_k a_{i_{k_1}} + v_k a_{i_{k_2}}\|_2^2 -  \langle u_k a_{i_{k_1}} + v_k a_{i_{k_2}}, x^k - x^{k-1} \rangle^2\neq 0$, i.e. $\text{dim}(\Pi_k)=2$, can be guaranteed, then the minimizers of \eqref{AS-pro2} can be computed by \eqref{Parameter2}.

	\begin{figure}[hptb]
		\centering
		\begin{tikzpicture}
			\draw (0,0)--(5,0)--(6,2)--(1,2)--(0,0);
			
			\filldraw (1.5,0.7) circle [radius=1pt]
			(4,3.5) circle [radius=1pt]
			(4,1.2) circle [radius=1pt];
			\draw (1.5,0.4) node {$x^{k-1}$};
			\draw (4.5,3.5) node {$x_*^0$};
			\draw (4.5,1.2) node {$x^k$};
			\draw (4.5,0.3) node {$\Pi_{k-1}$};
			\draw [dashed,-stealth] (4,3.5) -- (4,1.25);
			\draw [-stealth] (1.5,0.7) -- (3.95,1.19);
		\end{tikzpicture}
		\caption{A geometric interpretation of our design. The next iterate $x^k$  arises such that $x^k$ is the orthogonal projection of $x_*^0$ onto the affine set $\Pi_{k-1}=x^{k-1}+\mbox{Span}\{x^{k-1}-z^{k-1}, x^{k-1}-x^{k-2}\}$.}	
		\label{GI1}
	\end{figure}
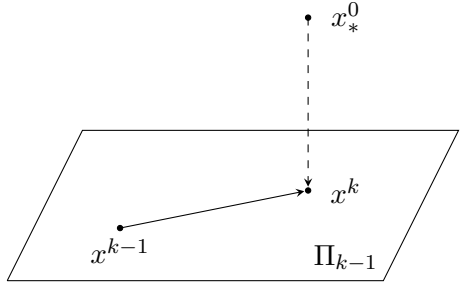
	
	  The following proposition indicates that if the hyperplane $\mathcal{H}_{i_{k_1}}$ and $\mathcal{H}_{i_{k_2}}$ are chosen such $z^k\neq x^k$ for $k\geq0$, then  $\|x^k - x^{k-1}\|_2^2  \|u_k a_{i_{k_1}} + v_k a_{i_{k_2}}\|_2^2 -  \langle u_k a_{i_{k_1}} + v_k a_{i_{k_2}}, x^k - x^{k-1} \rangle^2\neq 0$, i.e. $\text{dim}(\Pi_k)=2$.
	\begin{proposition}\label{prob-full}
		Let \(x^0=x^{-1} \) be an initial point, and let \(\{x^k\}_{k \geq 1}\) be the sequence obtained by solving the optimization problem \eqref{AS-pro2}. If $z^i\neq x^i$ for \(i = 0, \ldots, k\), then $\text{dim}(\Pi_k)=2$.
	\end{proposition}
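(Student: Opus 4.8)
The plan is to recast the conclusion $\dim(\Pi_k)=2$ as the linear independence of the two vectors $x^k-z^k$ and $x^k-x^{k-1}$ spanning the direction space of $\Pi_k$; this is equivalent to the non-vanishing of the Gram determinant displayed just before \eqref{Parameter2}. Since $x^k-z^k = 2(u_k a_{i_{k_1}}+v_k a_{i_{k_2}})\neq 0$ whenever $z^k\neq x^k$, it suffices to exhibit a single vector that is orthogonal to $x^k-x^{k-1}$ but \emph{not} to $x^k-z^k$. I would take the error vector $x^k-x_*^0$ as this separating functional and proceed in three steps.

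First, I would establish the algebraic identity, valid for any index $j$ with $z^j\neq x^j$,
\[
\langle x^j-z^j,\; x^j-x_*^0\rangle = \tfrac12\|x^j-z^j\|_2^2 > 0.
\]
This follows from $x^j-z^j = 2(u_j a_{i_{j_1}}+v_j a_{i_{j_2}})$ together with the relation $\langle a_i, x^j-x_*^0\rangle = \langle a_i,x^j\rangle-b_i$ established in \eqref{kill-unknown}: substituting the definitions of $u_j$ and $v_j$ collapses the inner product to $2\|u_j a_{i_{j_1}}+v_j a_{i_{j_2}}\|_2^2$, which equals $\tfrac12\|x^j-z^j\|_2^2$ and is strictly positive precisely because $z^j\neq x^j$. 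This identity is the engine of the proof: it says the reflection-difference direction always has a strictly positive component along the error direction.

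Second, I would invoke the defining property of the iteration. As already observed in the discussion leading to \eqref{Parameter2}, $x^k$ is the orthogonal projection of $x_*^0$ onto $\Pi_{k-1}$; hence $x^k-x_*^0$ is orthogonal to the direction space of $\Pi_{k-1}$, and since both $x^k$ and $x^{k-1}$ lie in $\Pi_{k-1}$ we obtain $\langle x^k-x_*^0,\; x^k-x^{k-1}\rangle = 0$. I would then exclude the degenerate case $x^k=x^{k-1}$: the direction space of $\Pi_{k-1}$ contains $x^{k-1}-z^{k-1}$, and applying the identity at $j=k-1$ (valid since $z^{k-1}\neq x^{k-1}$) gives $\langle x^{k-1}-x_*^0,\; x^{k-1}-z^{k-1}\rangle\neq 0$. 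Thus $x_*^0-x^{k-1}$ is not orthogonal to the direction space, so $x^{k-1}$ fails the optimality condition for the projection onto $\Pi_{k-1}$; since $x^k$ is that unique projection, $x^k\neq x^{k-1}$.

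Finally, I would combine these facts. Suppose $c_1(x^k-z^k)+c_2(x^k-x^{k-1})=0$. Taking the inner product with $x^k-x_*^0$ and using the identity at $j=k$ together with the orthogonality from the second step yields $c_1\tfrac12\|x^k-z^k\|_2^2 = 0$, forcing $c_1=0$ since $z^k\neq x^k$; then $c_2(x^k-x^{k-1})=0$ with $x^k\neq x^{k-1}$ forces $c_2=0$. Hence the two vectors are linearly independent and $\dim(\Pi_k)=2$. I expect the main obstacle to be one of discovery rather than technical difficulty: spotting the identity of the first step and realizing that $x^k-x_*^0$ simultaneously annihilates $x^k-x^{k-1}$ (via projection optimality) while detecting $x^k-z^k$ (via that identity). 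Once this is in place, ruling out $x^k=x^{k-1}$ and concluding linear independence are routine.
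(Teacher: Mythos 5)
Your proof is correct, but it takes a genuinely different route from the paper's. The paper argues by contradiction: it assumes $\dim(\Pi_k)<2$, writes $x^k-z^k=\lambda(x^k-x^{k-1})$ with $\lambda\neq 0$, deduces $\langle x^k-z^k,\,x^k-x_*^0\rangle=0$, and then, after a without-loss-of-generality sign assumption and a Cauchy--Schwarz case analysis, concludes that either $u_k=v_k=0$ or $a_{i_{k_1}}$ and $a_{i_{k_2}}$ are colinear, each forcing $z^k=x^k$, a contradiction. You instead prove the exact identity $\langle x^k-z^k,\,x^k-x_*^0\rangle=\tfrac12\|x^k-z^k\|_2^2>0$ (equivalently: both reflections fix $x_*^0$, so $\|z^k-x_*^0\|_2=\|x^k-x_*^0\|_2$ and polarization gives the constant $\tfrac12$), so that $x^k-x_*^0$ annihilates $x^k-x^{k-1}$ by projection optimality yet detects $x^k-z^k$; linear independence then falls out in one line. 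Notably, this is the same ``$=\tfrac12$'' computation that the paper defers to its proof of Lemma \ref{tilde}, so your argument deploys an ingredient the paper already needs, only earlier and to greater effect. Your route buys two things: it avoids the sign convention and the Cauchy--Schwarz dichotomy entirely, and it explicitly disposes of the degenerate case $x^k=x^{k-1}$, which the paper's parametrization $x^k-z^k=\lambda(x^k-x^{k-1})$ silently excludes (if $x^k=x^{k-1}$ while $z^k\neq x^k$, no such $\lambda$ exists even though $\dim(\Pi_k)=1$, so this case genuinely requires the separate treatment you give it via the identity at index $k-1$). What the paper's longer computation buys in exchange is a set of intermediate identities (equation \eqref{xie-proof-0604-1} and the ensuing dichotomy of zero residuals versus colinear rows) that are recycled verbatim in the proof of Proposition \ref{xie-0604-p}; under your approach those facts would have to be derived separately there.
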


	The following result indicates that if $\tilde{x}$ is not a solution to the linear system \(Ax = b\). Then there exist \(\{i, j\} \subseteq [m]\) such that $\mathcal{R}_{\mathcal{H}_{j}} \mathcal{R}_{\mathcal{H}_{i}} (\tilde{x})\neq \tilde{x}$.
	\begin{proposition}\label{xie-0604-p}
	Suppose that \(\operatorname{rank}(A) \geq 2\), and for any pair \(\{i, j\} \subseteq [m]\), define $z_{i,j}=\mathcal{R}_{\mathcal{H}_{j}} \mathcal{R}_{\mathcal{H}_{i}} (\tilde{x})$. If \(z_{i,j} = \tilde{x}\) holds for all \(\{i, j\} \subseteq [m]\), then \(\tilde{x}\) is a solution to the linear system \(Ax = b\).
	\end{proposition}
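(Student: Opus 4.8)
The plan is to prove the statement by computing the two-reflection map explicitly and reading off exactly when it fixes $\tilde{x}$. First I would set $\rho_i := \frac{\langle a_i, \tilde{x}\rangle - b_i}{\|a_i\|_2^2}$ for each $i \in [m]$, so that $\mathcal{R}_{\mathcal{H}_i}(\tilde{x}) = \tilde{x} - 2\rho_i a_i$. Applying the second reflection and introducing $\sigma_{i,j} := \frac{\langle a_j, \tilde{x}\rangle - b_j - 2\rho_i \langle a_j, a_i\rangle}{\|a_j\|_2^2}$, a direct substitution gives
$$z_{i,j} = \mathcal{R}_{\mathcal{H}_j}\mathcal{R}_{\mathcal{H}_i}(\tilde{x}) = \tilde{x} - 2\rho_i a_i - 2\sigma_{i,j} a_j.$$
Hence the hypothesis $z_{i,j} = \tilde{x}$ is equivalent to the single vector identity $\rho_i a_i + \sigma_{i,j} a_j = 0$, required to hold for every pair $\{i,j\} \subseteq [m]$. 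Note that $A\tilde{x} = b$ is precisely the statement that $\rho_i = 0$ for all $i$, so it suffices to extract $\rho_i = 0$ from these relations.

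Next I would fix an index $i$ (recalling that every row is nonzero, as otherwise the reflection is undefined) and argue that there exists some $j \neq i$ for which $a_i$ and $a_j$ are linearly independent. Indeed, if every row $a_j$ were a scalar multiple of $a_i$, then all rows would lie in the one-dimensional subspace $\operatorname{Span}\{a_i\}$, forcing $\operatorname{rank}(A) \leq 1$ and contradicting the assumption $\operatorname{rank}(A) \geq 2$. For such a $j$, the relation $\rho_i a_i + \sigma_{i,j} a_j = 0$ together with the linear independence of $\{a_i, a_j\}$ forces both coefficients to vanish; in particular $\rho_i = 0$. Since $i$ was arbitrary, this yields $\langle a_i, \tilde{x}\rangle = b_i$ for every $i \in [m]$, that is, $A\tilde{x} = b$, which is the desired conclusion.

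The only delicate point, and the step where the rank hypothesis is essential, is the collinear case: when $a_i$ and $a_j$ are parallel the identity $\rho_i a_i + \sigma_{i,j} a_j = 0$ collapses to a single scalar equation relating $\rho_i$ and $\sigma_{i,j}$, and therefore does not isolate $\rho_i$. Thus no conclusion can be drawn from an arbitrary pair alone. The assumption $\operatorname{rank}(A) \geq 2$ is exactly what guarantees, for each $i$, the existence of a non-collinear partner $j$, and it is precisely this observation that makes the argument go through. Everything else is the routine reflection computation recorded above.
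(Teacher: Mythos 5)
Your proof is correct, and it takes a genuinely more direct route than the paper's. The paper anchors everything at a single row $a_1$, splits $[m]$ into the rows independent of $a_1$ and those collinear with it, and for the independent rows recycles the computation from Proposition~\ref{prob-full}: there one only has the scalar relation $\langle \tilde{x} - z_{1,i},\, \tilde{x} - x_*^0 \rangle = 0$, so the argument must expand this, invoke Cauchy--Schwarz (with a without-loss-of-generality sign normalization), and use strictness of the inequality for non-collinear rows to force the residuals to vanish; the rows collinear with $a_1$ are then disposed of by consistency of the system ($a_j = \lambda_j a_1$ implies $b_j = \lambda_j b_1$). You instead use the full strength of the hypothesis: $z_{i,j} = \tilde{x}$ is the \emph{vector} identity $\rho_i a_i + \sigma_{i,j} a_j = 0$, not merely its inner product against one particular direction, so for each fixed $i$ a single non-collinear partner $j$ --- which exists because otherwise all rows would lie in $\operatorname{Span}\{a_i\}$, contradicting $\operatorname{rank}(A)\geq 2$ --- annihilates $\rho_i$ outright by linear independence. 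This buys you a shorter, self-contained argument with no Cauchy--Schwarz, no sign bookkeeping, and no appeal to consistency of $Ax=b$ (which the paper's proof invokes even though the proposition's statement never lists it; under your reading that all reflections are defined, i.e., all rows are nonzero, consistency is simply not needed). What the paper's heavier route buys in exchange is reuse: the inner-product/Cauchy--Schwarz computation is unavoidable in Proposition~\ref{prob-full}, where only the scalar orthogonality is available, so the paper extracts both propositions from one argument rather than writing a separate, simpler one here.
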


	Now we have already constructed the AmPRDR method described in Algorithm \ref{alg:AmPRDR}. From Proposition \ref{xie-0604-p} we know that if  \(x^k\) is not a solution to the linear system, there exists $\{i_{k_1},i_{k_2}\}$ such that \(z^k\neq x^k\), indicating that our requirement on the indexes is well-defined.
	\begin{algorithm}[htpb]
		\caption{\small{PRDR with adaptive momentum (AmPRDR)} }
		\label{alg:AmPRDR}
		\begin{algorithmic}
			\Require
			$A\in \mathbb{R}^{m\times n}$, $b\in \mathbb{R}^m$, $k=1$, and initial vector $x^0\in \mathbb{R}^{n}$.
			\begin{enumerate}
				
				\item[1:]  Select $\{i_{0_1},i_{0_2}\}$ according to strategy I or strategy II and compute 
				$$z^0= \mathcal{R}_{\mathcal{H}_{i_{0_2}}} \mathcal{R}_{\mathcal{H}_{i_{0_1}}} (x^0). $$
				
				\item[2:] If $z^0=x^0$, return to Step 1. Otherwise, set $x^1=\frac{1}{2} x^0+\frac{1}{2} z^0$. 
			
				\item[3:]  Select $\{i_{k_1},i_{k_2}\}$ according to strategy I or strategy II and 
				 compute $$z^k= \mathcal{R}_{\mathcal{H}_{i_{k_2}}} \mathcal{R}_{\mathcal{H}_{i_{k_1}}} (x^k). $$
				
				\item[4:] If $z^k=x^k$, return to Step 3. Otherwise, compute the parameters $\alpha_k$ and $\beta_k$ according to \eqref{Parameter2}.
				
				\item[5:] Update
				$$
				x^{k+1}:=(1-\alpha_k) x^k+\alpha_k z^k+\beta_k(x^k-x^{k-1}).
				$$
				\item[6:] If the stopping rule is satisfied, stop and go to output. Otherwise, set $k=k+1$ and return to Step 3.
			\end{enumerate}
			
			\Ensure
			The approximate solution $ x^k $.
		\end{algorithmic}
	\end{algorithm}
	
	\subsection{Convergence analysis}
	
We begin by introducing some auxiliary variables. Define
\[
\tilde{x}^{k+1} \coloneqq \frac{1}{2} x^k + \frac{1}{2} z^k.
\]
It is evident that \(\tilde{x}^{k+1}\) corresponds to the next iterate generated by Algorithm~\ref{alg:PRDR} with a fixed parameter \(\alpha = \frac{1}{2}\), when starting from \(x^k\). Moreover, it is clear that \(\tilde{x}^{k+1} \in \Pi_k\).
Next, we define the index set
\[
\mathcal{Q}_k := \left\{ (i, j) \mid z^k = \mathcal{R}_{\mathcal{H}_j} \mathcal{R}_{\mathcal{H}_i}(x^k) \neq x^k \right\},
\]
which collects all index pairs that yield a nontrivial reflection at iteration \(k\).
We also define the auxiliary vector
\[
\zeta_k := \langle z^k - x^k,\, x^k - x^{k-1} \rangle (z^k - x^k) - \|z^k - x^k\|_2^2 (x^k - x^{k-1}),
\]
and denote by \(\theta_k\) the angle between \(\tilde{x}^{k+1} - x_*^0\) and \(\zeta_k\).
We further define the scalar
\begin{equation}\label{gamma_k}
	\gamma_k \coloneqq \inf_{(i, j) \in \mathcal{Q}_k} \left\{ \cos^2 \theta_k \right\}.
\end{equation}
Based on the above definitions, we now present the convergence result for Algorithm~\ref{alg:AmPRDR}. It can be observed that the AmPRDR method exhibits convergence bound that is at least as that of the PDRD method.
	
	\begin{theorem}\label{main-AmPRDR}
		Suppose that the linear system $Ax=b$ is consistent, $\operatorname{rank}(A)\geq2$, and $x^0\in\mathbb{R}^n $ is the arbitrary initial vector. Let $x_*^0=A^{\dagger}b+(I-A^\dagger A)x^0$. 
		Then the iteration sequence $\{x^k\}_{k\geq0}$ generated by Algorithm \ref{alg:AmPRDR} using strategy I satisfies
		$$
		\mathbb{E} \left[ \|x^{k+1} - x_*^0\|_2^2 \mid x^k \right] \leq (1 - \gamma_k) 
		\left(1-\frac{ \sigma_{\min}^2(M^{\frac{1}{2}}A)}{\|A\|^2_F}
		\right)
		\|x^k - x_*^0\|_2^2,
		$$
			and the iteration sequence $\{x^k\}_{k\geq0}$ generated by Algorithm \ref{alg:AmPRDR} using strategy II satisfies
		$$
		\mathbb{E} \left[ \|x^{k+1} - x_*^0\|_2^2 \mid x^k \right] \leq (1 - \gamma_k) 
		\left(
		1-2
		\frac{\sigma_{\min}^2(N^\frac{1}{2}A)}
		{ \Vert A \Vert_F^4 -\Vert AA^{\top} \Vert_F^2}
		\right)
		\|x^k - x_*^0\|_2^2,
		$$
		where $\gamma_k$ is given by \eqref{gamma_k}, and $M$ and $N$ are given by \eqref{definition-M} and \eqref{definition-N}, respectively.
	\end{theorem}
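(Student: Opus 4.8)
The plan is to show that one AmPRDR step is exactly one PRDR step (with $\alpha=\tfrac12$) followed by a single additional orthogonal projection, so that the momentum direction can only decrease the residual, and to quantify that extra decrease through $\gamma_k$. Since the optimal parameters $\alpha_k,\beta_k$ in \eqref{Parameter2} solve \eqref{AS-pro2}, the iterate $x^{k+1}$ is the orthogonal projection of $x_*^0$ onto the affine plane $\Pi_k=x^k+\operatorname{Span}\{z^k-x^k,\,x^k-x^{k-1}\}$, whereas $\tilde{x}^{k+1}=\tfrac12 x^k+\tfrac12 z^k$ is the PRDR iterate from $x^k$ with $\alpha=\tfrac12$. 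I would first establish the pointwise identity, valid on the event $(i_{k_1},i_{k_2})\in\mathcal{Q}_k$,
\[
\|x^{k+1}-x_*^0\|_2^2=(1-\cos^2\theta_k)\,\|\tilde{x}^{k+1}-x_*^0\|_2^2,
\]
and then take conditional expectations and invoke Theorem~\ref{main-PRDR} with one iteration and $\alpha=\tfrac12$.

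For the geometric identity I would first verify that $\tilde{x}^{k+1}$ is the orthogonal projection of $x_*^0$ onto the line $L_k:=x^k+\operatorname{Span}\{z^k-x^k\}\subseteq\Pi_k$. Using $z^k-x^k=-2(u_k a_{i_{k_1}}+v_k a_{i_{k_2}})$ together with the definitions of $u_k,v_k$ and the identity $\langle a_i,x_*^0\rangle=b_i$, a direct computation gives $\langle x^k-x_*^0,\,z^k-x^k\rangle=-\tfrac12\|z^k-x^k\|_2^2$, whence $\langle \tilde{x}^{k+1}-x_*^0,\,z^k-x^k\rangle=0$; therefore $\tilde{x}^{k+1}-x_*^0\perp(z^k-x^k)$ and $\|x^k-\tilde{x}^{k+1}\|_2^2=\|x^k-x_*^0\|_2^2-\|\tilde{x}^{k+1}-x_*^0\|_2^2$. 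Next, observing that $\langle\zeta_k,\,z^k-x^k\rangle=0$, the vector $\zeta_k$ is the component of $x^k-x^{k-1}$ orthogonal to $z^k-x^k$, so $\{z^k-x^k,\,\zeta_k\}$ is an orthogonal basis of the direction space of $\Pi_k$. By optimality $x^{k+1}-x_*^0$ is orthogonal to that direction space, and the Pythagorean theorem yields
\[
\|x^{k+1}-x_*^0\|_2^2=\|x^k-x_*^0\|_2^2-\frac{\langle x_*^0-x^k,\,z^k-x^k\rangle^2}{\|z^k-x^k\|_2^2}-\frac{\langle x_*^0-x^k,\,\zeta_k\rangle^2}{\|\zeta_k\|_2^2}.
\]

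To close the identity I would combine the two displays: the first two terms on the right collapse to $\|\tilde{x}^{k+1}-x_*^0\|_2^2$, while for the last term I use $\tilde{x}^{k+1}-x^k=\tfrac12(z^k-x^k)\perp\zeta_k$ to replace $\langle x_*^0-x^k,\zeta_k\rangle$ by $-\langle\tilde{x}^{k+1}-x_*^0,\zeta_k\rangle$, so that $\frac{\langle x_*^0-x^k,\zeta_k\rangle^2}{\|\zeta_k\|_2^2}=\cos^2\theta_k\,\|\tilde{x}^{k+1}-x_*^0\|_2^2$ by the definition of $\theta_k$. This produces the pointwise identity above. Since every realized pair lies in $\mathcal{Q}_k$ and $\gamma_k=\inf_{(i,j)\in\mathcal{Q}_k}\cos^2\theta_k$ is deterministic given the history, $1-\cos^2\theta_k\le 1-\gamma_k$ holds for every realization, giving
\[
\mathbb{E}\big[\|x^{k+1}-x_*^0\|_2^2\mid x^k\big]\le(1-\gamma_k)\,\mathbb{E}\big[\|\tilde{x}^{k+1}-x_*^0\|_2^2\mid x^k\big].
\]
Applying Theorem~\ref{main-PRDR} with a single iteration, initial point $x^k$, and $\alpha=\tfrac12$ (for which $4\alpha(1-\alpha)=1$ under strategy I and $8\alpha(1-\alpha)=2$ under strategy II) to the remaining factor yields the two stated inequalities.

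The main obstacle I anticipate is the careful bookkeeping in the geometric identity, in particular verifying that $\tilde{x}^{k+1}$ is exactly the projection onto $L_k$ (the computation $\langle x^k-x_*^0,z^k-x^k\rangle=-\tfrac12\|z^k-x^k\|_2^2$) and that $\zeta_k\perp(z^k-x^k)$, which together make $\{z^k-x^k,\zeta_k\}$ orthogonal and let the momentum contribution factor cleanly as $\cos^2\theta_k$. A secondary technical point is the rejection sampling in Algorithm~\ref{alg:AmPRDR}: the conditional expectation is over indices restricted to $\mathcal{Q}_k$, but since the discarded pairs satisfy $z^k=x^k$ and contribute zero progress to the PRDR estimate, this conditioning does not weaken the single-step factor of Theorem~\ref{main-PRDR}, so that bound remains valid.
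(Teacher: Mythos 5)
Your proposal is correct and follows essentially the same route as the paper's proof: the pointwise identity $\|x^{k+1}-x_*^0\|_2^2=(1-\cos^2\theta_k)\,\|\tilde{x}^{k+1}-x_*^0\|_2^2$ on $\mathcal{Q}_k$, the bound $\cos^2\theta_k\ge\gamma_k$, the observation that conditioning on $\mathcal{Q}_k$ only helps because rejected pairs give $\tilde{x}^{k+1}=x^k$, and the one-step case of Theorem~\ref{main-PRDR} with $\alpha=\tfrac12$ are exactly the paper's steps (Lemma~\ref{tilde} followed by the Pythagorean argument). The only difference is in bookkeeping: you derive the identity by expanding the projection of $x_*^0$ onto $\Pi_k$ in the orthogonal basis $\{z^k-x^k,\,\zeta_k\}$, whereas the paper verifies algebraically that $x^{k+1}=\tilde{x}^{k+1}-l_k\zeta_k$ with $l_k=\langle\tilde{x}^{k+1}-x_*^0,\zeta_k\rangle/\|\zeta_k\|_2^2$ and then applies Pythagoras; both hinge on the same two facts, namely $\langle x^k-x_*^0,\,z^k-x^k\rangle=-\tfrac12\|z^k-x^k\|_2^2$ and $\zeta_k\perp(z^k-x^k)$, together with $\dim(\Pi_k)=2$ from Proposition~\ref{prob-full}.
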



\section{Numerical Experiment}

    In this section, we implement the PRDR method (Algorithm \ref{alg:PRDR}) and the AmPRDR method (Algorithm \ref{alg:AmPRDR}). 
	To maintain conciseness, we denote the PRDR method with strategy I and  strategy II  as PRDR-I and PRDR-II, respectively. 
	Similarly, we represent the AmPRDR method with strategy I and  strategy II as AmPRDR-I and AmPRDR-II, respectively.
	We also compare our methods with RDR and momentum variant of RDR (mRDR) \cite{han2024randomized}. 
		All the methods are implemented in  {\sc Matlab} R2023b for Windows $11$ on a desktop PC with Intel(R) Core(TM) Ultra 7 155H CPU @ 1.40 GHz and 32 GB memory. 
	
	\subsection{When to use volume sampling}
	\label{sect-when}

	Although volume sampling is a powerful technique in the design of randomized algorithms, its computational cost can be significant. Nevertheless, a number of exact and approximate algorithms have been developed for efficiently generating random subsets \(\mathcal{S} \sim \operatorname{Vol}_s(AA^\top)\); see, for example, \cite{kulesza2012determinantal, anari2016monte, derezinski2024solving}. In our experiments, we adopt the implementation strategy proposed in \cite[Section 5]{xiang2025randomized} for generating volume samples.
	The computational characteristics of volume sampling become particularly advantageous when dealing with multiple linear systems sharing the same coefficient matrix $A$ but differing in their right-hand side vectors $b$. In such scenarios, the preprocessing computations need only be performed once, after which the generated sampling distributions can be cached and reused across all subsequent problem instances.



	\subsection{Numerical setup}
	
To investigate the effect of geometric relationships between hyperplanes on algorithm performance, we construct controlled test problems through the following generation procedure. 
	Given parameters $m, n, r, \sigma_{1}$, and $\delta$, we construct matrices \(A = U D V^\top\), where \(U \in \mathbb{R}^{m \times r}\) and \(V \in \mathbb{R}^{n \times r}\) are column-orthogonal matrices. Using MATLAB notation, we generate the column-orthogonal matrices with the following commands: {\tt [U,$\sim$]=qr(randn(m,r),0)} and {\tt [V,$\sim$]=qr(randn(n,r),0)}.  The diagonal matrices \(D\) have entries
	$
	D_{1,1} =\sigma_{1}$  and $D_{i,i}=\delta, i=2,\ldots,r.
	$
	This construction allows the angular relationships between the hyperplanes to be adjusted by varying the ratio between \(\sigma_1\) and \(\delta\). A larger value of \(\sigma_1\) relative to \(\delta\) leads to more pronounced angular differences between the hyperplanes. 
	
In addition, we also consider a different approach, in which the entries of the matrix \(A\) are drawn independently from a uniform distribution over the interval \([t, 1]\). Varying the value of \(t\) affects the coherence of \(A\). As \(t\) approaches $1$, the rows of \(A\) become increasingly correlated, resulting in higher coherence.

For the PRDR method we set  $\alpha=\frac{1}{2}$. The mRDR method uses time-invariant parameters \(\alpha\) and \(\beta\). Selecting appropriate momentum parameters \(\beta\) is crucial for efficient convergence of mRDR. To ensure the consistency of the linear system, we first generate a vector \( x^* \) that follows a standard normal distribution, and then set \( b = Ax^* \). All computations are initialized with $x^0=0$. We define the relative solution error (RSE) as
	$
	\text{RSE} := \frac{\|x^k - A^\dagger b\|_2^2}{\|A^\dagger b\|_2^2},
	$
	and the algorithm terminates once the RSE $\leq 10^{-12}$.  In our tests, we set \(\alpha = \frac{1}{2}\), and determine \(\beta\) through numerical experiments. We consider $\|z^k-x^k\|_2$ as zero when it is less than $10^{-16}$.  All results are obtained by averaging over $20$ independent trials.


\subsection{Comparison to the other methods: randomly generated instances}

In this subsection, we compare the performance of PRDR-I, PRDR-II, AmPRDR-I, and AmPRDR-II with the RDR and mRDR methods. For the Gaussian matrix, we examine the performance of those methods in terms of number of iterations and CPU time across varying values of $\sigma_{1}$. Figure \ref{figure1} presents the results for the case where $A$ is full rank while Figure \ref{figure2} illustrates the case where it is rank-deficient.

\begin{figure}[tbhp]
	\centering
	\begin{tabular}{cc} 
		\includegraphics[width=0.32\linewidth]{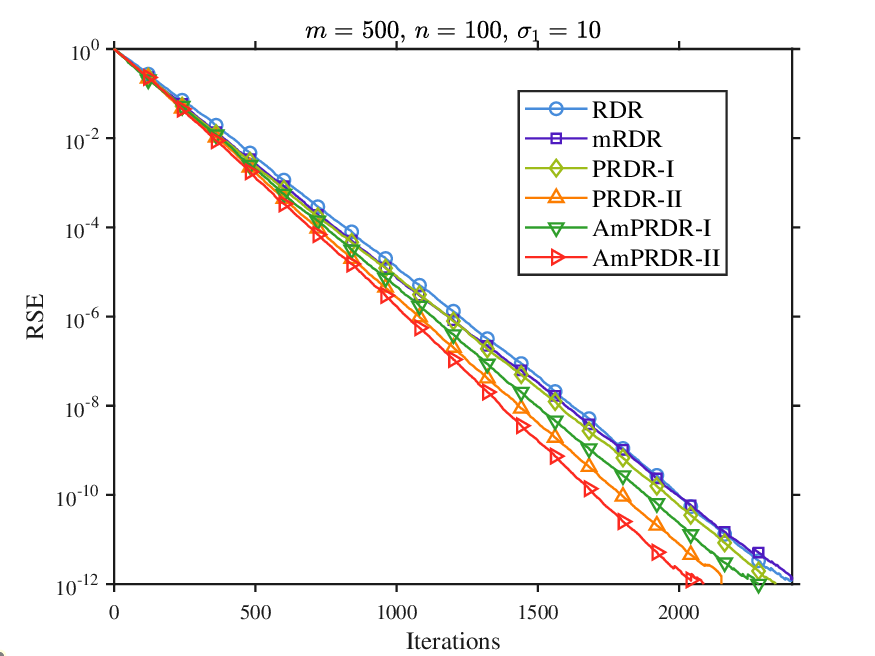}
        \includegraphics[width=0.32\linewidth]{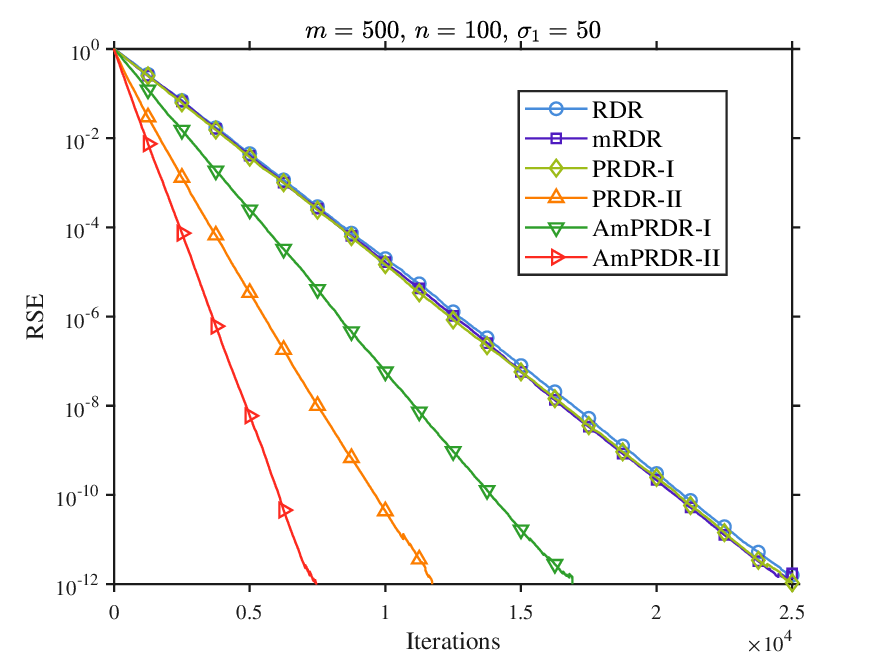}
        \includegraphics[width=0.32\linewidth]{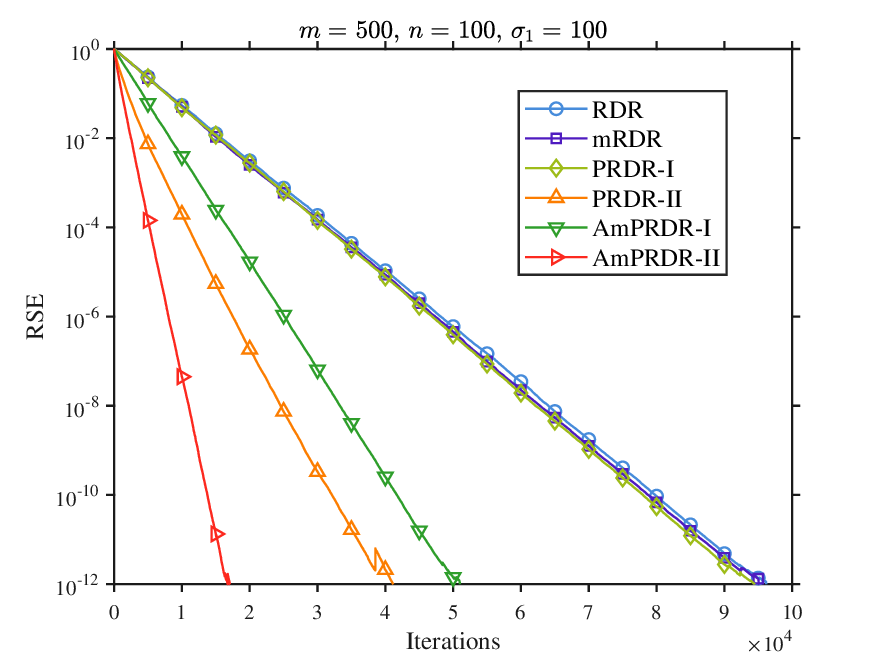}\\
        \includegraphics[width=0.32\linewidth]{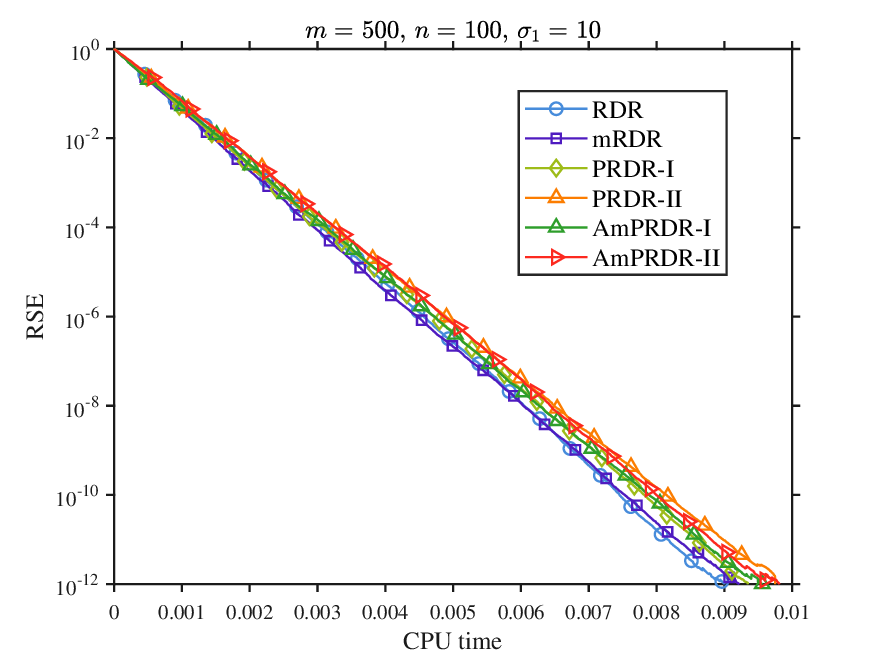}
        \includegraphics[width=0.32\linewidth]{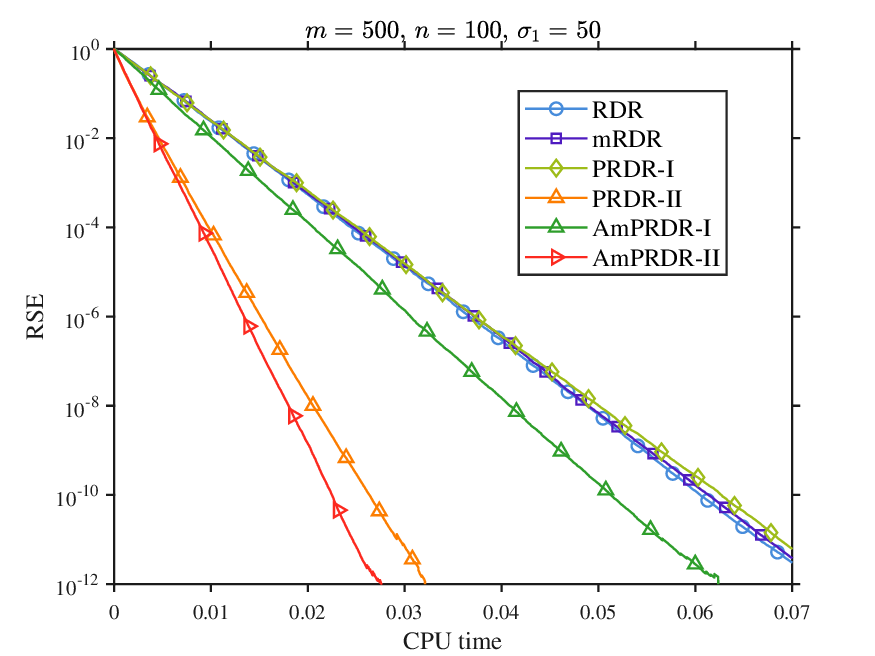}
        \includegraphics[width=0.32\linewidth]{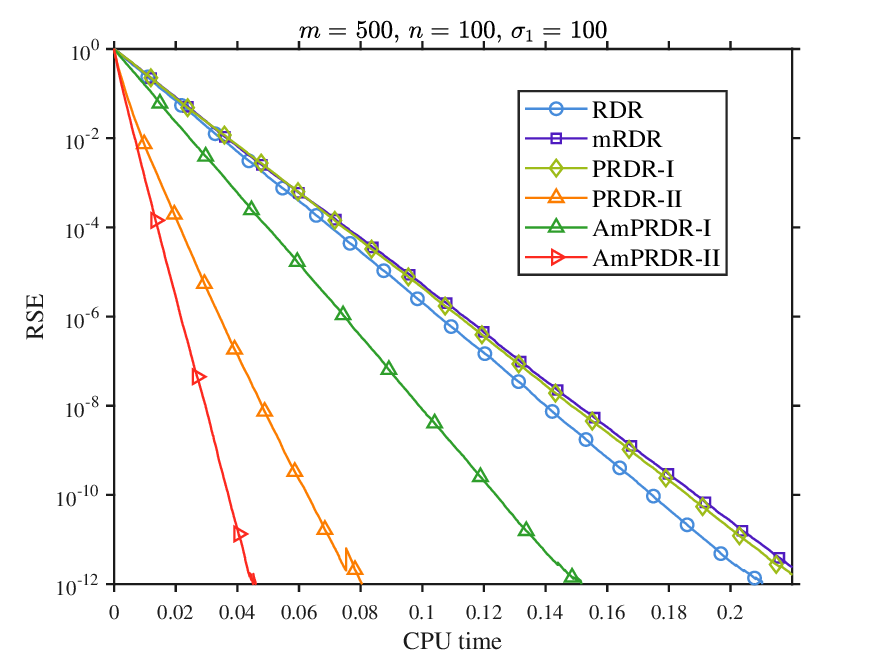}
	\end{tabular}
	\caption{Performance of RDR, mRDR, PRDR-I, PRDR-II, AmPRDR-I, and AmPRDR-II for linear systems with full rank Gaussian matrix. 	Figures depict the iteration and the CPU time (in seconds) vs RSE.  The title of each plot indicates the values of $m, n$, and $\sigma_1$. We fix $\delta=1$ and $r=100$. }
	\label{figure1}
\end{figure}

\begin{figure}[tbhp]
	\centering
	\begin{tabular}{cc} 
		\includegraphics[width=0.32\linewidth]{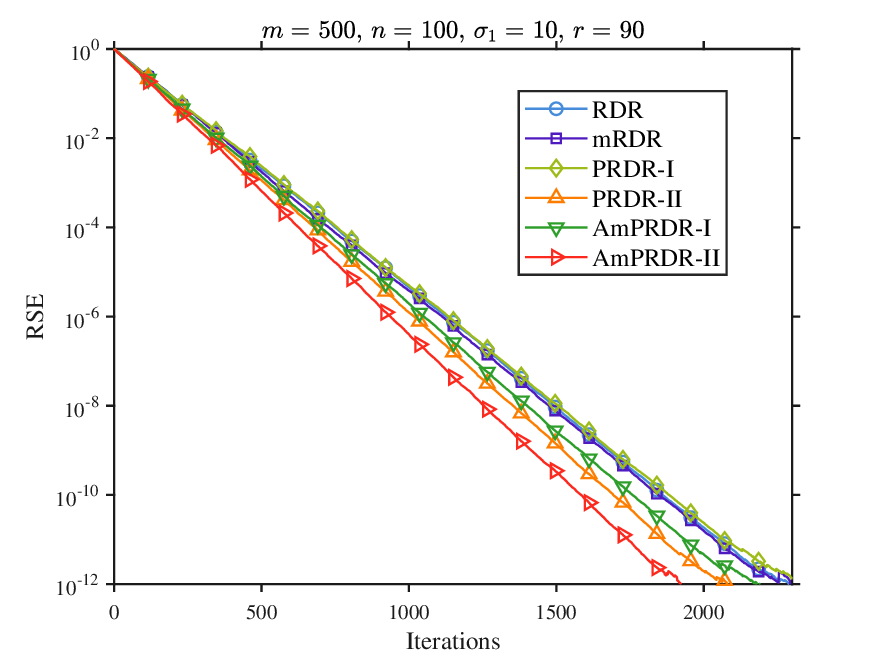}
		\includegraphics[width=0.32\linewidth]{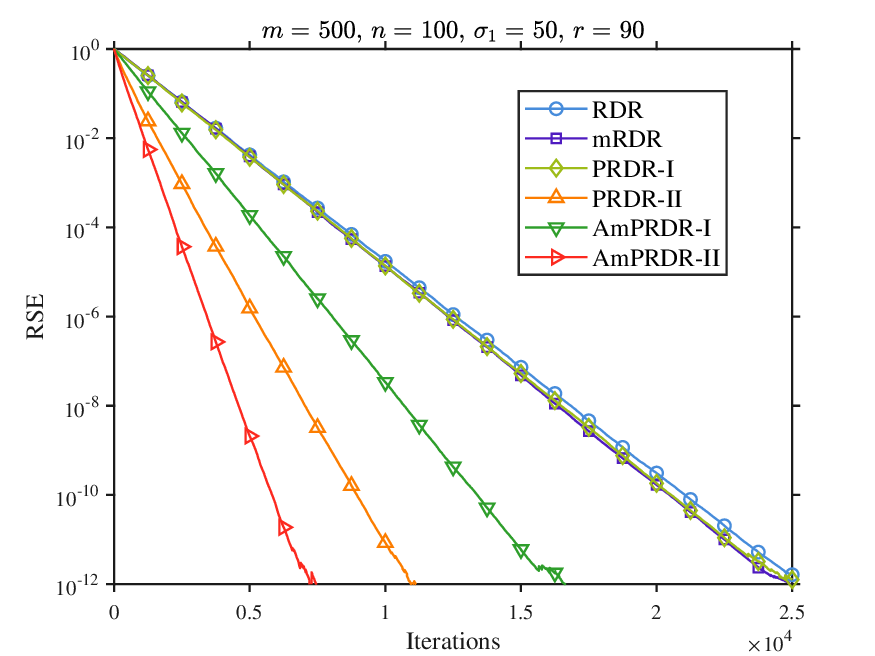}
		\includegraphics[width=0.32\linewidth]{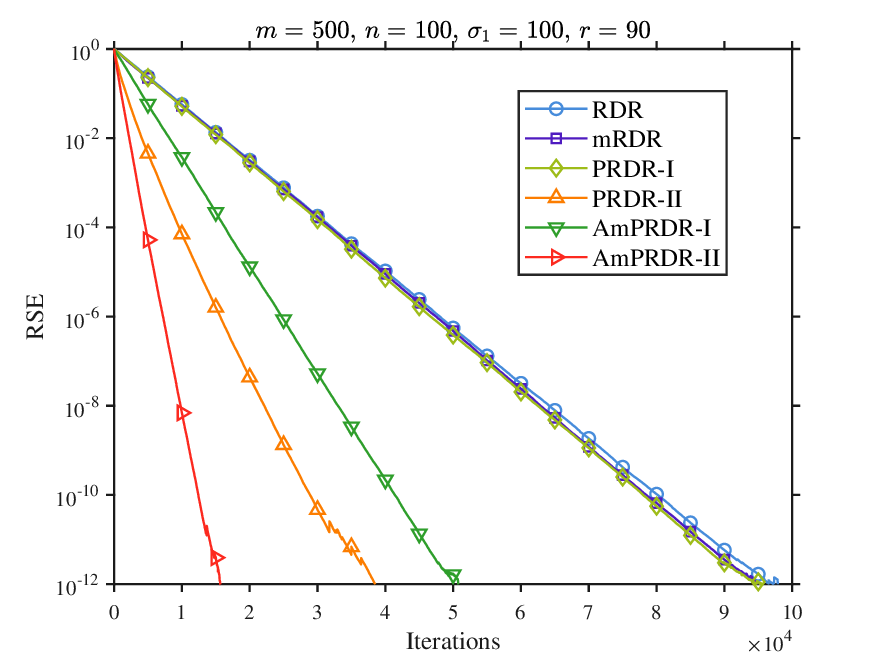}\\
		\includegraphics[width=0.32\linewidth]{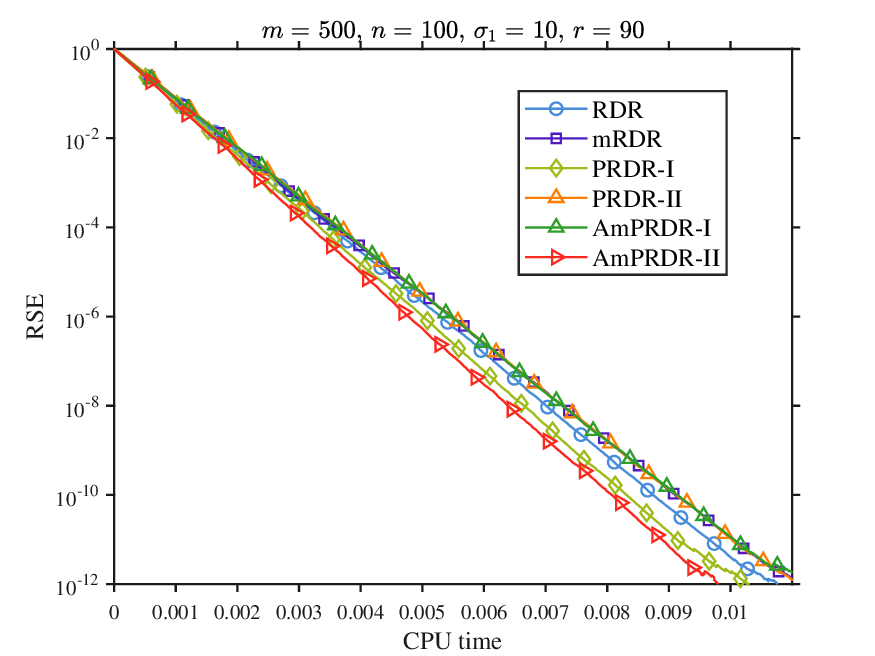}
		\includegraphics[width=0.32\linewidth]{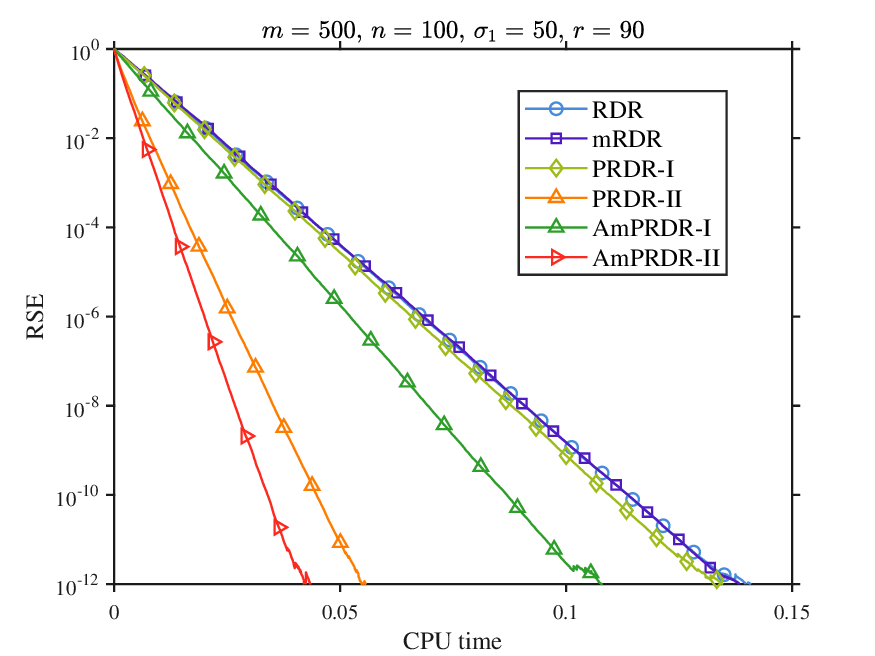}
		\includegraphics[width=0.32\linewidth]{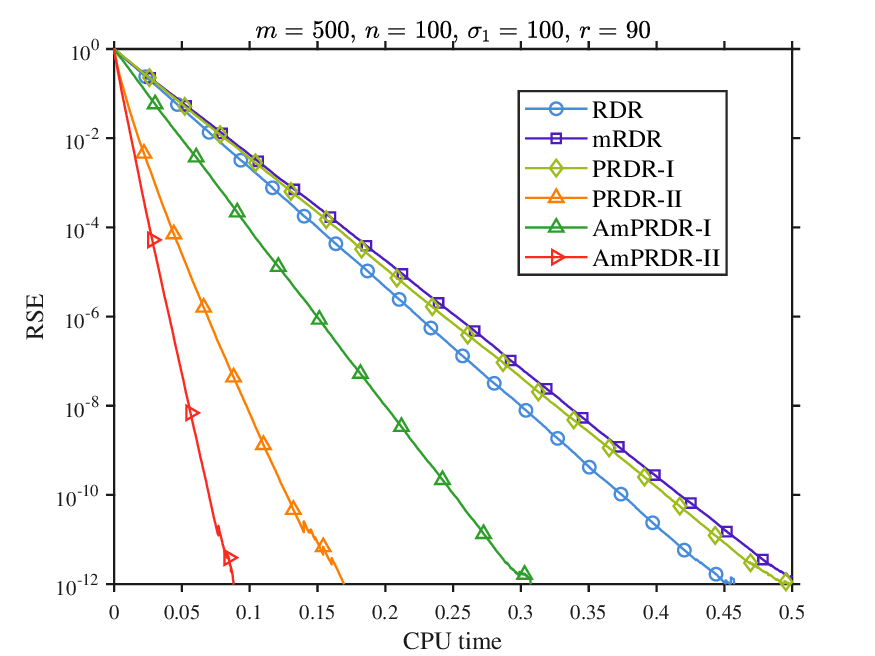}
	\end{tabular}
	\caption{Performance of RDR, mRDR, PRDR-I, PRDR-II, AmPRDR-I, and AmPRDR-II for linear systems with rank-deficient Gaussian matrix. Figures depict the iteration and the CPU time (in seconds) vs RSE.  The title of each plot indicates the values of $m, n$,  $\sigma_1$, and $r$. We fix $\delta=1$. }
	\label{figure2}
\end{figure}

It can be observed from Figures \ref{figure1} and \ref{figure2} that when the value of \(\sigma_1\) is relatively small (\(\sigma_1 = 10\)), the numerical performance of PRDR-I, PRDR-II, AmPRDR-I, AmPRDR-II, RDR, and mRDR  is comparable. However, as the value of \(\sigma_1\) increases, leading to more pronounced angular differences between the hyperplanes, the proposed PRDR-I, PRDR-II, AmPRDR-I, and AmPRDR-II consistently outperform RDR and mRDR. Moreover, it can be seen that AmPRDR-I and AmPRDR-II perform better than the non-momentum variants, PRDR-I and PRDR-II, respectively, demonstrating the advantage of adaptive momentum. Additionally, both AmPRDR-II and PRDR-II outperform AmPRDR-I and PRDR-I, indicating that volume sampling is more advantageous than without-replacement sampling when there are more pronounced angular differences between the hyperplanes.

For the uniform distribution matrix, we examine performance in terms of iterations and CPU time across varying values of \(t\). Figure \ref{figure3} presents the results. 
Similarly, as the value of \(t\) increases, resulting in higher coherence, the proposed PRDR-I, PRDR-II, AmPRDR-I, and AmPRDR-II exhibit better numerical performance compared to RDR and mRDR. Furthermore, AmPRDR-I and AmPRDR-II outperform their non-momentum variants, PRDR-I and PRDR-II, again demonstrating the advantage of adaptive momentum. In this case, the algorithms derived from volume sampling and without-replacement sampling exhibit similar performance.

\begin{figure}[tbhp]
	\centering
	\begin{tabular}{cc} 
		\includegraphics[width=0.31\linewidth]{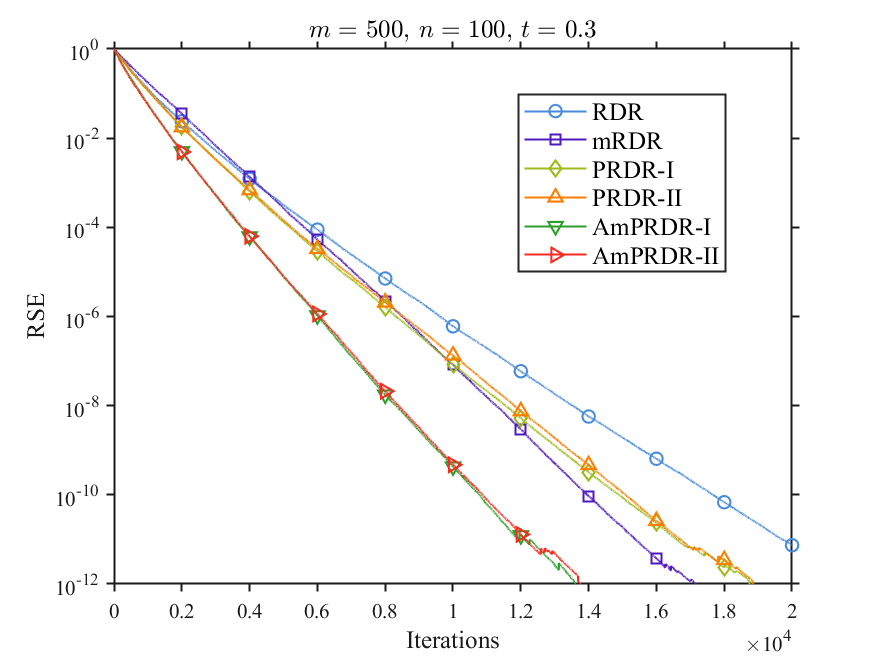}
		\includegraphics[width=0.31\linewidth]{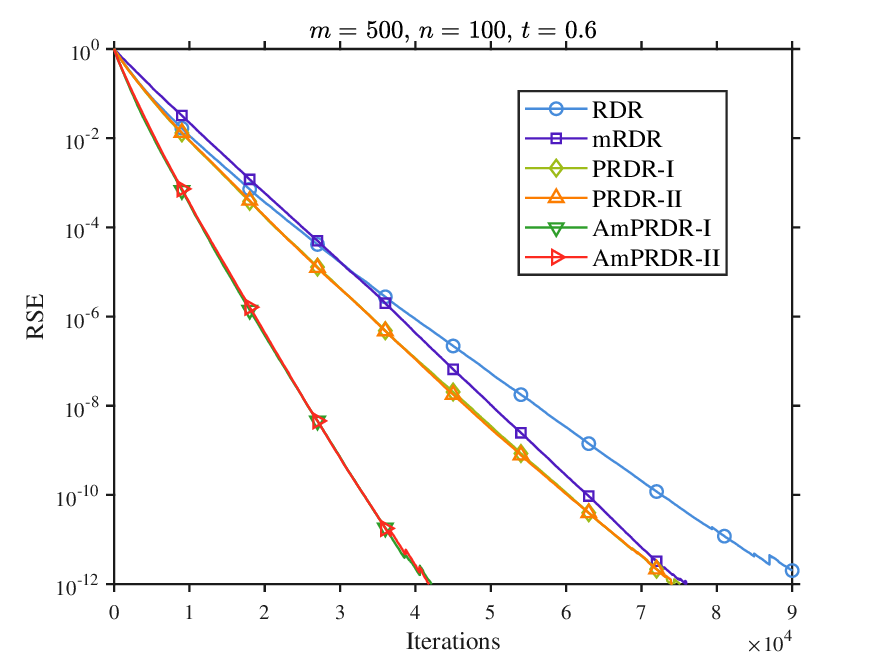}
		\includegraphics[width=0.31\linewidth]{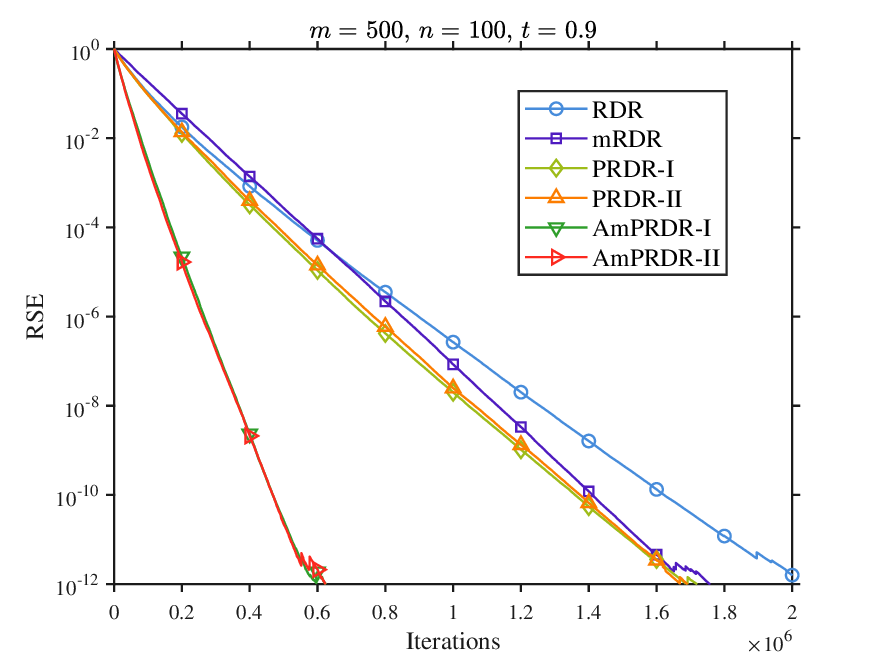}\\
		\includegraphics[width=0.31\linewidth]{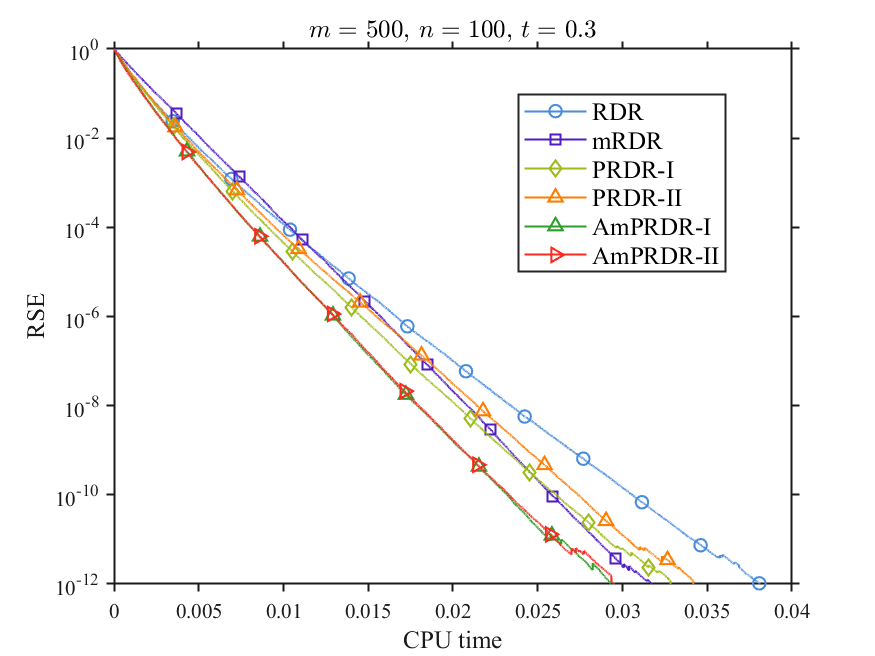}
		\includegraphics[width=0.31\linewidth]{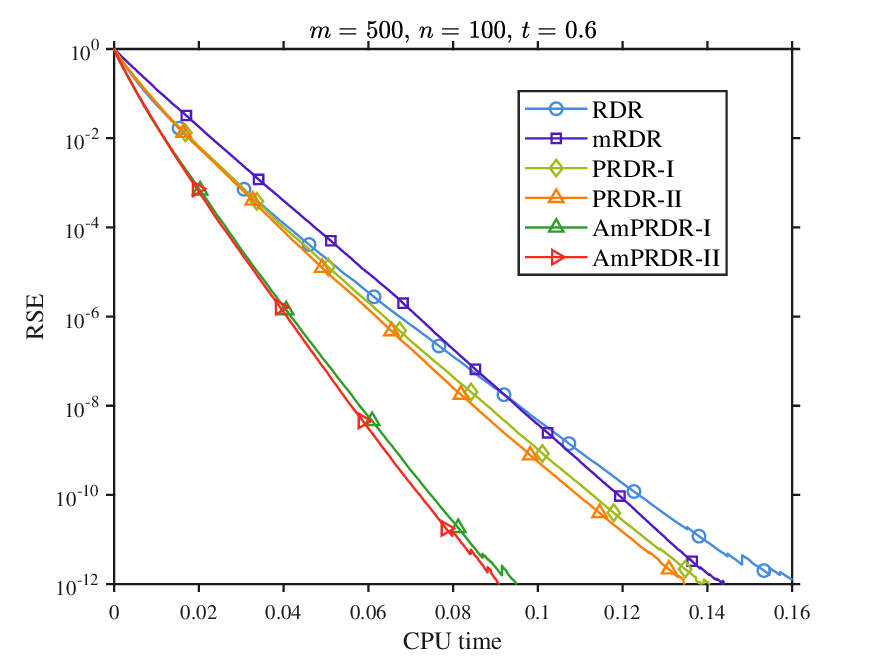}
		\includegraphics[width=0.31\linewidth]{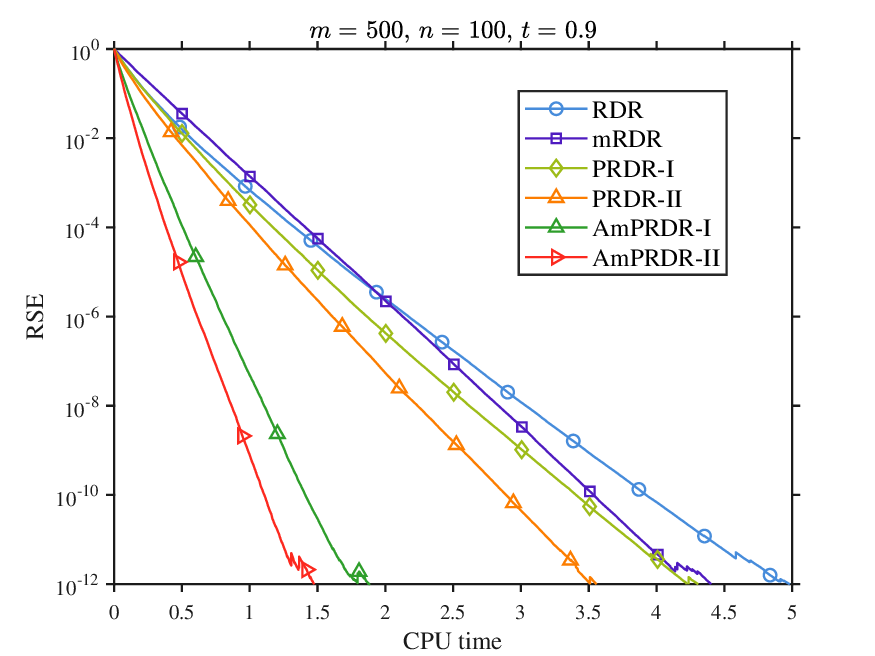}
	\end{tabular}
	\caption{Performance of RDR, mRDR, PRDR-I, PRDR-II, AmPRDR-I, and AmPRDR-II for linear systems with uniform distribution matrix. Figures depict the iteration and the CPU time (in seconds) vs RSE.  The title of each plot indicates the values of $m, n$, and $t$. }
	\label{figure3}
\end{figure}

\subsection{Comparison to the other methods: real-world test instances}

In this subsection, we evaluate real-world test instances from the SuiteSparse Matrix Collection \cite{kolodziej2019suitesparse}. Given that momentum-based algorithms consistently outperform their non-momentum variants, we focus our comparison on mRDR, AmPRDR-I, and AmPRDR-II. Table~\ref{table1} presents the computational results for nine representative matrices, encompassing both full-rank and rank-deficient cases.
It can be seen from Table~\ref{table1} that AmPRDR-I and AmPRDR-II achieve performance comparable to mRDR, with mRDR exhibiting slightly lower CPU times. However, we note that the reported results for mRDR assume optimal pre-tuning of the momentum parameter \(\beta\), and the time required for this parameter selection is not included in the reported CPU times. In practical applications, where parameter tuning can incur significant computational overhead, our adaptive methods (AmPRDR-I and AmPRDR-II) offer a clear advantage by achieving similar performance without the need for manual parameter tuning.
Hence, while mRDR may show marginally faster results under ideal tuning conditions, AmPRDR-I and AmPRDR-II are more practical and efficient in real-world scenarios due to their parameter-free nature.

\begin{table}
	\renewcommand\arraystretch{1.5}
	\setlength{\tabcolsep}{2pt}
	\caption{ The average Iter and CPU of mRDR, AmPRDR-I, and AmPRDR-II for linear systems with coefficient matrices from SuiteSparse Matrix Collection \cite{kolodziej2019suitesparse}. The appropriate momentum parameters $\beta$ for mRDR are also listed. All computations are terminated once $\text{RSE}<10^{-12}$.}
	\label{table1}
	\centering
	{\scriptsize
		\begin{tabular}{  |c| c| c| c| c |c |c |c | c|c| c| }
			\hline
			\multirow{2}{*}{ Matrix}& \multirow{2}{*}{ $m\times n$ }  &\multirow{2}{*}{rank}& \multirow{2}{*}{$\frac{\sigma_{\max}(A)}{\sigma_{\min}(A)}$}  &\multicolumn{3}{c| }{mRDR}  &\multicolumn{2}{c| }{AmPRDR-I} &\multicolumn{2}{c| }{AmPRDR-II}
			\\
			\cline{5-11}
			& &   &    & Iter & CPU   & $\beta$  & Iter & CPU   & Iter & CPU     \\
			\hline
			{\tt cari}  & $400 \times 1200$ & 400 & $3.13$  & $4.51\text{e}+3$  &  0.1382 & 0.05 
			& $4.54\text{e}+3$ & 0.1735  & $4.69\text{e}+3$ & 0.1765\\
			\hline
			{\tt cage}  & $366 \times 366$ & 366 & $3.88$ & $4.24\text{e}+3$  &  0.0647 & 0.05
			& $4.16\text{e}+3$ & 0.0820  & $4.11\text{e}+3$ & 0.0762\\
			\hline
			{\tt WorldCities} & $315\times100$ &  100  &6.60  & $2.10\text{e}+4$ &   0.3320 & 0.50
			& $3.40\text{e}+4$ &   0.6480 &  $2.85\text{e}+4$ &  0.5039  \\
			\hline
			{\tt bibd\_16\_8}& $120\times12870$ &  120  & 9.54 &  $3.11\text{e}+3$ &   2.4356 &0.20
			&  $3.23\text{e}+3$ &   3.6225  & $3.15\text{e}+3$ &   3.5717   \\
			\hline
{\tt crew1} & $135\times6469 $ &  135  &18.20  & $1.14\text{e}+4$ &  2.9008 & 0.35
&  $1.14\text{e}+4$ &  6.9852 &  $1.14\text{e}+4$ &   6.4665 \\
\hline
{\tt p0291} & $252\times543$ &  252  & $1.44\text{e}+2$  & $2.86\text{e}+5$ &  6.7358 & 0.25
& $3.16\text{e}+5$ &   11.0822 &  $2.43\text{e}+5$ &  6.9812  \\
\hline
{\tt ch5-5-b1}  & $200 \times 25$ & 24 & $2.64\text{e}+15$ & $3.12\text{e}+2$  & 0.0044 & 0.05
& $3.01\text{e}+2$ & 0.0056  & $2.96\text{e}+2$ & 0.0057\\
\hline
{\tt n4c6-b1}  & $210 \times 21$ & 20 & $2.83\text{e}+15$ & $2.65\text{e}+2$  & 0.0053 & 0.05
& $2.59\text{e}+2$ & 0.0065  & $2.30\text{e}+2$ & 0.0065\\
\hline
{\tt n2c6-b2}  & $455 \times 105$ & 91 & $1.00\text{e}+16$ &  $1.26\text{e}+3$ &   0.0168 & 0.05
&  $1.21\text{e}+3$ &   0.0213  & $1.22\text{e}+3$ &   0.0238   \\
\hline
\end{tabular}
}
\end{table}

\section{Concluding Remarks}\label{sec-concluding}
In this work, we improved the practical computational efficiency of the RDR method by introducing without-replacement sampling, volume sampling, and adaptive heavy-ball momentum. Our algorithm does not rely on prior knowledge to determine the relaxation and momentum parameters. Instead, these parameters can be adaptively updated based on iterative information.
We proved that the proposed algorithm converges linearly in expectation. Numerical experiments verified the effectiveness of our method, particularly for linear systems in which the hyperplanes exhibited more pronounced angular differences.

There are numerous avenues for future research. For example, the randomized sparse Kaczmarz method proposed in \cite{schopfer2019linear} is recognized as an effective strategy for obtaining sparse solutions to linear systems. Extending our proposed algorithm to address sparse recovery problems could be a promising direction. Additionally, the application of randomized iterative methods for solving generalized absolute value equations (GAVE) has been explored in \cite{xie2024randomized}. Investigating the RDR method for solving GAVE is also a valuable topic.

\bibliographystyle{plain}
\bibliography{reference}

	\section{Appendix. Proof of the main results}
	\label{appendix-P}
	\subsection{Omitted proofs in Section \ref{section3}}
	\begin{proof}[Proof of Proposition \ref{propMN}]
		We first prove that the matrix \( M \) is positive definite. That is, for any nonzero vector \( x = (x_1, x_2, \dots, x_m)^\top \in \mathbb{R}^m \), we need to show that
		$
		x^\top Mx>0
		$.
		Let \( x = (x_1, x_2, \dots, x_m)^\top \in \mathbb{R}^m \). Then we have
		$$
		\begin{aligned}
			x^{\top}M x
			=&(\Delta+1)x^{\top}x+
			x^\top
			\begin{bmatrix}
				-\frac{\Vert a_1 \Vert_2^2}{\Vert A \Vert_F^2 -\Vert a_1 \Vert_2^2}  & -\frac{2\left\langle a_1,a_2\right\rangle}{\Vert A \Vert_F^2 -\Vert a_2 \Vert_2^2}   & \cdots & -\frac{2\left\langle a_1,a_m\right\rangle}{\Vert A \Vert_F^2 -\Vert a_m \Vert_2^2} \\
				-\frac{2\left\langle a_1,a_2\right\rangle}{\Vert A \Vert_F^2 -\Vert a_1 \Vert_2^2}   & -\frac{\Vert a_2 \Vert_2^2}{\Vert A \Vert_F^2 -\Vert a_2 \Vert_2^2}   & \cdots & -\frac{2\left\langle a_2,a_m\right\rangle}{\Vert A \Vert_F^2 -\Vert a_m \Vert_2^2} \\
				\vdots & \vdots & \ddots & \vdots \\
				-\frac{2\left\langle a_1,a_m\right\rangle}{\Vert A \Vert_F^2 -\Vert a_1 \Vert_2^2}   & -\frac{2\left\langle a_2,a_m\right\rangle}{\Vert A \Vert_F^2 -\Vert a_2 \Vert_2^2}   & \cdots & -\frac{\Vert a_m \Vert_2^2}{\Vert A \Vert_F^2 -\Vert a_m \Vert_2^2}  
			\end{bmatrix}x\\
			=& 
			(\Delta+1)x^{\top}x-\sum_{i=1}^{m}\frac{\Vert a_i \Vert_2^2}{\Vert A \Vert_F^2 -\Vert a_i \Vert_2^2} x_i^2-\sum_{i=1}^{m}\sum\limits_{\substack{ j=1\\j\neq i}}^m\frac{2\left\langle a_i,a_j\right\rangle}{\Vert A \Vert_F^2 -\Vert a_j \Vert_2^2}x_ix_j
			\\
			\overset{(a)}{=}&\sum_{i=1}^{m}\left( \sum_{j=1}^m \frac{\|a_j\|_2^2}{\|A\|_F^2 - \|a_j\|_2^2} +1-\frac{\Vert a_i \Vert_2^2}{\Vert A \Vert_F^2 -\Vert a_i \Vert_2^2} \right)x_i^2-\sum_{i=1}^{m}\sum\limits_{\substack{ j=1\\j\neq i}}^m\frac{2\left\langle a_i,a_j\right\rangle}{\Vert A \Vert_F^2 -\Vert a_j \Vert_2^2}x_ix_j
			\\
			\overset{(b)}{=}&\sum_{i=1}^{m} \sum\limits_{\substack{ j=1\\j\neq i}}^m \left(\frac{\|a_j\|_2^2}{\|A\|_F^2 - \|a_j\|_2^2} +\frac{\Vert a_j \Vert_2^2}{\Vert A \Vert_F^2 -\Vert a_i \Vert_2^2} \right)x_i^2-\sum_{i=1}^{m}\sum\limits_{\substack{ j=1\\j\neq i}}^m\frac{2\left\langle a_i,a_j\right\rangle}{\Vert A \Vert_F^2 -\Vert a_j \Vert_2^2}x_ix_j
			\\
			=&\sum_{1\leq i<j\leq m}  \left(\frac{1}{\|A\|_F^2 - \|a_j\|_2^2} +\frac{1}{\Vert A \Vert_F^2 -\Vert a_i \Vert_2^2} \right)\left(\|a_j\|^2_2x_i^2+\|a_i\|^2_2x_j^2-2\langle a_i,a_j\rangle x_ix_j\right),
		\end{aligned}
		$$
		where $(a)$ follows from the definition of $\Delta= \sum_{j=1}^m \frac{\|a_j\|_2^2}{\|A\|_F^2 - \|a_j\|_2^2}$ and $(b)$ follows from  $1=\sum\limits_{\substack{ j=1\\j\neq i}}^m \frac{\Vert a_j \Vert_2^2 }{\Vert A \Vert_F^2 -\Vert a_i \Vert_2^2}$, $i=1,\dots,m$.
		
		We define 
		$$
		\mathcal{I}:=\{i\mid x_i\neq0\}, \ \mathcal{I}^c:=[m]\setminus \mathcal{I},
		$$
		and the pair sets
		$$
		\mathcal{J}_{+}:=\{(i,j)\mid x_ix_j>0, i\neq j, i,j\in\mathcal{I}\}, \ \mathcal{J}_{-}:=\{(i,j)\mid x_ix_j<0, i\neq j , i,j\in\mathcal{I}\}.
		$$
		Without loss of generality, assume \( \mathcal{I} = \{1, \dots, \ell\} \). Then,
		\[
		\begin{aligned}
			x^\top M x
			=&\ \sum_{\substack{i,j \in \mathcal{I}\\ i<j}} \left( \frac{1}{\|A\|_F^2 - \|a_j\|_2^2} + \frac{1}{\|A\|_F^2 - \|a_i\|_2^2} \right) \left( \|a_j\|_2^2 x_i^2 + \|a_i\|_2^2 x_j^2 - 2\langle a_i,a_j\rangle x_i x_j \right) \\
			&+ \sum_{\substack{i \in \mathcal{I},\, j \in \mathcal{I}^c}} \left( \frac{1}{\|A\|_F^2 - \|a_j\|_2^2} + \frac{1}{\|A\|_F^2 - \|a_i\|_2^2} \right) \left( \|a_j\|_2^2 x_i^2 + \|a_i\|_2^2 x_j^2 - 2\langle a_i,a_j\rangle x_i x_j \right) \\
			&+ \sum_{\substack{i,j \in \mathcal{I}^c\\ i<j}} \left( \frac{1}{\|A\|_F^2 - \|a_j\|_2^2} + \frac{1}{\|A\|_F^2 - \|a_i\|_2^2} \right) \left( \|a_j\|_2^2 x_i^2 + \|a_i\|_2^2 x_j^2 - 2\langle a_i,a_j\rangle x_i x_j \right).
		\end{aligned}
		\]
		Note that all terms involving \( x_j = 0 \) for \( j \in \mathcal{I}^c \) vanish. Thus, we obtain
		\[
		\begin{aligned}
			x^\top M x =&\sum_{\substack{i,j \in \mathcal{I}\\ i<j}} \left( \frac{1}{\|A\|_F^2 - \|a_j\|_2^2} + \frac{1}{\|A\|_F^2 - \|a_i\|_2^2} \right) \left( \|a_j\|_2^2 x_i^2 + \|a_i\|_2^2 x_j^2 - 2\langle a_i,a_j\rangle x_i x_j \right) \\
			&+ \sum_{\substack{i \in \mathcal{I},\, j \in \mathcal{I}^c}} \left( \frac{1}{\|A\|_F^2 - \|a_j\|_2^2} + \frac{1}{\|A\|_F^2 - \|a_i\|_2^2} \right)\|a_j\|_2^2 x_i^2
			\\
			\geq&\ \sum_{(i,j)\in \mathcal{J}_+} \left( \frac{1}{\|A\|_F^2 - \|a_j\|_2^2} + \frac{1}{\|A\|_F^2 - \|a_i\|_2^2} \right) \left( \|a_j\|_2 x_i - \|a_i\|_2 x_j \right)^2 \\
			&+ \sum_{(i,j)\in \mathcal{J}_-} \left( \frac{1}{\|A\|_F^2 - \|a_j\|_2^2} + \frac{1}{\|A\|_F^2 - \|a_i\|_2^2} \right) \left( \|a_j\|_2 x_i + \|a_i\|_2 x_j \right)^2 \\
			&+ \sum_{\substack{i \in \mathcal{I},\, j \in \mathcal{I}^c}} \left( \frac{1}{\|A\|_F^2 - \|a_j\|_2^2} + \frac{1}{\|A\|_F^2 - \|a_i\|_2^2} \right) \|a_j\|_2^2 x_i^2 \\
			\geq&\ 0.
		\end{aligned}
		\]
		The first inequality follows from the Cauchy–Schwarz inequality, and equality holds if and only if \( \langle a_i, a_j \rangle^2 = \|a_i\|_2^2 \|a_j\|_2^2 \) for all \( i \ne j \in \mathcal{I} \), i.e., the vectors \( a_i \) (for \( i \in \mathcal{I} \)) are colinear. The last inequality becomes equality only if \( a_j = 0 \) for all \( j \in \mathcal{I}^c \).
		Therefore, \( x^\top M x = 0 \) if and only if all \( a_i \) (for \( i \in \mathcal{I} \)) are colinear and \( a_j = 0 \) for all \( j \in \mathcal{I}^c \), which implies \( \operatorname{rank}(A) \leq 1 \). Since we assume \( \operatorname{rank}(A) \geq 2 \), it follows that \( x^\top M x > 0 \) for all \( x \ne 0 \), and hence \( M \) is positive definite.

		Next, we prove $N$	is positive definite.
		For any nonzero vector \( x = (x_1, x_2, \dots, x_m)^\top \in \mathbb{R}^m \), we have
		$$
		\begin{aligned}
			x^{\top}Nx=&x^\top \begin{bmatrix}
				\sum\limits_{j=1}^m g_{1,j} \|a_j\|_2^2 & -g_{1,2} \langle a_1, a_2 \rangle & \cdots & -g_{1,m} \langle a_1, a_m \rangle \\
				-g_{2,1} \langle a_2, a_1 \rangle & \sum\limits_{j=1}^m g_{2,j} \|a_j\|_2^2 & \cdots & -g_{2,m} \langle a_2, a_m \rangle \\
				\vdots & \vdots & \ddots & \vdots \\
				-g_{m,1} \langle a_m, a_1 \rangle & -g_{m,2} \langle a_m, a_2 \rangle & \cdots & \sum\limits_{j=1}^m g_{m,j} \|a_j\|_2^2
			\end{bmatrix} x\\
			=& \sum_{i=1}^{m} \left(\sum\limits_{\substack{ j=1\\j\neq i}}^m \|a_j\|^2_2g_{j,i}\right)x^2_i- \sum_{i=1}^{m} \sum\limits_{\substack{ j=1\\j\neq i}}^m g_{j,i}\langle a_i,a_j\rangle x_ix_j\\
			=& \sum_{1\leq i<j\leq m}g_{j,i} \left(\|a_j\|^2_2x_i^2+\|a_i\|^2_2x_j^2-2\langle a_i,a_j\rangle x_ix_j\right).
		\end{aligned}
		$$
		Similar to the discussion above, we can conclude that $N$ is positive definite.
	\end{proof}

	Next, we will prove Theorem \ref{main-PRDR}. First, we introduce some useful lemmas. For any $i\in[m]$, we define
	\begin{equation}\label{reflection operator}
		T_{\mathcal{H}_i}:=I-2\frac{a_{i}a_i^{\top}}{\|a_i\|^2_2}.
	\end{equation}
	It is easy to verify that $T_{\mathcal{H}_i}T_{\mathcal{H}_i}=I$, and thus for any $y\in \mathbb{R}^n$, $\|T_{\mathcal{H}_i}y\|_2=\|y\|_2$.
	
	\begin{lemma}\label{exp-reflection-operator}
		Let \( T_{\mathcal{H}_i} \) be defined in \eqref{reflection operator}. 
		If indices \( i, j \in [m] \) are selected according to Strategy I, then 
		\[
		\mathbb{E}\left[ T_{\mathcal{H}_j} T_{\mathcal{H}_i} \right] = I - \frac{2}{\|A\|_F^2} A^\top M A,
		\]
		where the matrix \( M \) is defined in \eqref{definition-M}.
		If indices \( i, j \in [m] \) are selected according to Strategy II, then we have
		\[
		\mathbb{E}\left[ T_{\mathcal{H}_j} T_{\mathcal{H}_i} \right] = I - \frac{4}{\|A\|_F^4 - \|A A^\top\|_F^2} A^\top N A,
		\]
		where the matrix \( N \) is defined in \eqref{definition-N}.
	\end{lemma}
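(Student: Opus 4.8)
The plan is to write $T_{\mathcal{H}_j}T_{\mathcal{H}_i}$ out explicitly and then average each resulting term against the appropriate sampling law, matching the outcome to the target via the elementary identity $A^\top C A = \sum_{i,j} C_{ij}\, a_i a_j^\top$ (valid for any $C\in\mathbb{R}^{m\times m}$), applied with $C=M$ and $C=N$. Expanding the product of the two rank-one perturbations and using $a_j a_j^\top a_i a_i^\top=\langle a_i,a_j\rangle a_j a_i^\top$ gives
$$
T_{\mathcal{H}_j}T_{\mathcal{H}_i}=I-2\frac{a_i a_i^\top}{\|a_i\|_2^2}-2\frac{a_j a_j^\top}{\|a_j\|_2^2}+4\frac{\langle a_i,a_j\rangle}{\|a_i\|_2^2\|a_j\|_2^2}\,a_j a_i^\top,
$$
so that the expectation splits into a trivial identity term, two "projection" terms, and one rank-two cross term.

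For Strategy I, I would average these four terms separately. The first projection term uses only the marginal of $i_{k_1}$ and yields $\frac{1}{\|A\|_F^2}\sum_i a_i a_i^\top$. The second is the delicate one: summing the joint law $\frac{\|a_i\|_2^2}{\|A\|_F^2}\cdot\frac{\|a_j\|_2^2}{\|A\|_F^2-\|a_i\|_2^2}$ over $i\neq j$ and switching the order of summation produces the weight $\sum_{i\neq j}\frac{\|a_i\|_2^2}{\|A\|_F^2-\|a_i\|_2^2}=\Delta-\frac{\|a_j\|_2^2}{\|A\|_F^2-\|a_j\|_2^2}$ in front of $a_j a_j^\top$, invoking the definition of $\Delta$. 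Combining the two projection contributions reproduces exactly the diagonal part $\sum_i\bigl(\Delta+1-\frac{\|a_i\|_2^2}{\|A\|_F^2-\|a_i\|_2^2}\bigr)a_i a_i^\top$ of $A^\top M A$, up to the factor $2/\|A\|_F^2$. The cross term gives $\frac{4}{\|A\|_F^2}\sum_{i\neq j}\frac{\langle a_i,a_j\rangle}{\|A\|_F^2-\|a_i\|_2^2}\,a_j a_i^\top$; relabeling $i\leftrightarrow j$ and using symmetry of $\langle\cdot,\cdot\rangle$ converts this to $\frac{4}{\|A\|_F^2}\sum_{i\neq j}\frac{\langle a_i,a_j\rangle}{\|A\|_F^2-\|a_j\|_2^2}\,a_i a_j^\top$, matching the off-diagonal part $-\sum_{i\neq j}\frac{2\langle a_i,a_j\rangle}{\|A\|_F^2-\|a_j\|_2^2}a_i a_j^\top$ of $A^\top M A$.

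For Strategy II the pair is unordered, so I would first symmetrize: conditioning on $\mathcal{S}_k=\{i,j\}$ and averaging the two orderings replaces the cross term by $2\frac{\langle a_i,a_j\rangle}{\|a_i\|_2^2\|a_j\|_2^2}(a_i a_j^\top+a_j a_i^\top)$, which is what makes the symmetric matrix $A^\top N A$ emerge. I then compute $\det\!\bigl(A_{\{i,j\}}A_{\{i,j\}}^\top\bigr)=\|a_i\|_2^2\|a_j\|_2^2-\langle a_i,a_j\rangle^2=\|a_i\|_2^2\|a_j\|_2^2\,g_{i,j}$ and the normalizer $Z:=\sum_{i<j}\bigl(\|a_i\|_2^2\|a_j\|_2^2-\langle a_i,a_j\rangle^2\bigr)=\tfrac12\bigl(\|A\|_F^4-\|AA^\top\|_F^2\bigr)$, using $\sum_{i,j}\langle a_i,a_j\rangle^2=\|AA^\top\|_F^2$. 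The factors $\|a_i\|_2^2\|a_j\|_2^2$ in the probabilities cancel the denominators, the identity term sums to $I$, and after converting the sum over unordered pairs to a sum over ordered pairs (with $g_{i,i}=0$) the remainder matches $A^\top N A=\sum_i\bigl(\sum_{j\neq i}g_{i,j}\|a_j\|_2^2\bigr)a_i a_i^\top-\sum_{i\neq j}g_{i,j}\langle a_i,a_j\rangle a_i a_j^\top$, with the overall factor $2/Z=4/(\|A\|_F^4-\|AA^\top\|_F^2)$.

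The main obstacle is the careful bookkeeping of the asymmetry in Strategy I: because the without-replacement law weights the first and second draws differently, $M$ is not symmetric and its off-diagonal denominators are tied to the \emph{column} index, so getting the $i\leftrightarrow j$ relabeling and the $\Delta$-accounting to land exactly on the stated $M$ is where all the care lies. For Strategy II the only nontrivial steps are the determinant/normalization identity and remembering to average over the two orderings so that the symmetric form $A^\top N A$ is recovered.
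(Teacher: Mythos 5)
Your proof is correct and takes essentially the same approach as the paper: expand $T_{\mathcal{H}_j}T_{\mathcal{H}_i}$ into the identity, two projection-type terms, and a rank-one cross term, average each against the sampling law, and identify the result with $I-\tfrac{2}{\|A\|_F^2}A^\top MA$ (resp. $I-\tfrac{4}{\|A\|_F^4-\|AA^\top\|_F^2}A^\top NA$); the paper merely organizes the bookkeeping through auxiliary diagonal matrices $\hat D$, $D$, $\bar D$ (resp. $G$, $G^i$, $\bar G$) rather than your entrywise matching via $A^\top CA=\sum_{i,j}C_{ij}\,a_ia_j^\top$. Your explicit symmetrization over the two orderings in Strategy II is in fact slightly more careful than the paper, which passes silently from the unordered-pair sum to the ordered double sum --- a step that, at the level of matrices, is justified precisely by your averaging argument, since $T_{\mathcal{H}_i}T_{\mathcal{H}_j}\neq T_{\mathcal{H}_j}T_{\mathcal{H}_i}$ in general.
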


	\begin{proof}
		We begin by computing the expected composition \( \mathbb{E}\left[ T_{\mathcal{H}_j} T_{\mathcal{H}_i} \right] \) under Strategy I. We have
		\begin{equation}\label{exp-2.4}
			\begin{aligned}
				&\mathop{\mathbb{E}}\left[T_{\mathcal{H}_j}T_{\mathcal{H}_i}\right]
				=\sum\limits_{i=1}^m \frac{\Vert a_i \Vert_2^2}{\Vert A \Vert_F^2} \ \sum\limits_{\substack{ j=1\\j\neq i}}^m \frac{\Vert a_j \Vert_2^2}{\Vert A \Vert_F^2 -\Vert a_i \Vert_2^2} 
				\left(I-2\frac{a_ja_j^{\top}}{\|a_j\|^2_2}\right)
				\left(I-2\frac{a_ia_i^{\top}}{\|a_i\|^2_2}\right)\\
				=&I-\underbrace{2\sum\limits_{i=1}^m \frac{\Vert a_i \Vert_2^2}{\Vert A \Vert_F^2}\  \sum\limits_{\substack{ j=1\\j\neq i}}^m \frac{\Vert a_j \Vert_2^2}{\Vert A \Vert_F^2 -\Vert a_i \Vert_2^2}\ \frac{a_ja_j^{\top}}{\|a_j\|^2_2}}_{\textcircled{a}}
				-\underbrace{2\sum\limits_{i=1}^m \frac{\Vert a_i \Vert_2^2}{\Vert A \Vert_F^2}\  \sum\limits_{\substack{ j=1\\j\neq i}}^m \frac{\Vert a_j \Vert_2^2}{\Vert A \Vert_F^2 -\Vert a_i \Vert_2^2}\ \frac{a_ia_i^{\top}}{\|a_i\|^2_2}}_{\textcircled{b}}\\
				&+\underbrace{4\sum\limits_{i=1}^m \frac{\Vert a_i \Vert_2^2}{\Vert A \Vert_F^2}\  \sum\limits_{\substack{ j=1\\j\neq i}}^m \frac{\Vert a_j \Vert_2^2}{\Vert A \Vert_F^2 -\Vert a_i \Vert_2^2}\ \frac{a_ja_j^{\top}}{\|a_j\|^2_2}\ \frac{a_ia_i^{\top}}{\|a_i\|^2_2}}_{\textcircled{c}}
			\end{aligned}
		\end{equation}
		We now analyze the three expressions $\textcircled{a},\textcircled{b}$, and $\textcircled{c}$ separately. For term \( \textcircled{a} \), we have
		$$
		\begin{aligned}
			\textcircled{a}=&2\sum\limits_{i=1}^m \frac{\Vert a_i \Vert_2^2}{\Vert A \Vert_F^2}\  \sum\limits_{\substack{ j=1\\j\neq i}}^m \frac{\Vert a_j \Vert_2^2}{\Vert A \Vert_F^2 -\Vert a_i \Vert_2^2}\ \frac{a_ja_j^{\top}}{\|a_j\|^2_2}
			=\frac{2}{\Vert A \Vert_F^2}
			\sum\limits_{i=1}^m \left(\frac{\Vert a_i \Vert_2^2 }{\Vert A \Vert_F^2 -\Vert a_i \Vert_2^2}\ \sum\limits_{\substack{ j=1\\j\neq i}}^m a_ja_j^{\top}\right)\\
			=&\frac{2}{\Vert A \Vert_F^2}
			\sum\limits_{i=1}^m \left(\sum\limits_{\substack{ j=1\\j\neq i}}^m \frac{\Vert a_j \Vert_2^2 }{\Vert A \Vert_F^2 -\Vert a_j \Vert_2^2}\right)a_ia_i^{\top}=2\frac{A^{\top}\hat{D}A}{\Vert A \Vert_F^2},
		\end{aligned}
		$$
		where the third equality follows by regrouping terms based on the coefficients of \( a_i a_i^\top \), and the last equality uses the definition
		$
		\hat{D} \coloneqq \operatorname{diag}(\hat{d}_1, \ldots, \hat{d}_m)$ with $ \hat{d}_i = \sum_{\substack{j=1 \\ j \neq i}}^m \frac{\|a_j\|_2^2}{\|A\|_F^2 - \|a_j\|_2^2}.
		$
		For term $\textcircled{b}$, we have
		$$
		\begin{aligned}
			\textcircled{b}=&2\sum\limits_{i=1}^m \frac{\Vert a_i \Vert_2^2}{\Vert A \Vert_F^2}\  \sum\limits_{\substack{ j=1\\j\neq i}}^m \frac{\Vert a_j \Vert_2^2}{\Vert A \Vert_F^2 -\Vert a_i \Vert_2^2}\ \frac{a_ia_i^{\top}}{\|a_i\|^2_2}
			=2\sum\limits_{i=1}^m \frac{a_ia_i^{\top}}{\Vert A \Vert_F^2}\  \sum\limits_{\substack{ j=1\\j\neq i}}^m \frac{\Vert a_j \Vert_2^2}{\Vert A \Vert_F^2 -\Vert a_i \Vert_2^2}\\
			=&2\sum\limits_{i=1}^m \frac{a_ia_i^{\top}}{\Vert A \Vert_F^2}\
			=\ 2\frac{A^{\top}A}{\Vert A \Vert_F^2}.
		\end{aligned}
		$$   
		For term $\textcircled{c}$, we have
		\begin{equation}
			\begin{aligned}
				\textcircled{c}=&4\sum\limits_{i=1}^m \frac{\Vert a_i \Vert_2^2}{\Vert A \Vert_F^2}\  \sum\limits_{\substack{ j=1\\j\neq i}}^m \frac{\Vert a_j \Vert_2^2}{\Vert A \Vert_F^2 -\Vert a_i \Vert_2^2}\ \frac{a_ja_j^{\top}}{\|a_j\|^2_2}\ \frac{a_ia_i^{\top}}{\|a_i\|^2_2}\\
				=&4\sum\limits_{i=1}^m   \sum\limits_{\substack{ j=1\\j\neq i}}^m\ 
				\frac{a_ja_j^{\top}}{\Vert A \Vert_F^2 -\Vert a_i \Vert_2^2}\ \frac{a_ia_i^{\top}}{\Vert A \Vert_F^2}\\
				=&4\sum\limits_{i=1}^m  \frac{A^{\top}A-a_ia_i^{\top}}{\Vert A \Vert_F^2 -\Vert a_i \Vert_2^2}\frac{a_ia_i^{\top}}{\Vert A \Vert_F^2}
				\\
				=&\frac{4}{\Vert A \Vert_F^2} \left(A^{\top}A \sum\limits_{i=1}^m  \frac{a_ia_i^{\top}}{\Vert A \Vert_F^2 -\Vert a_i \Vert_2^2} \     -      \sum\limits_{i=1}^m  \frac{a_ia_i^{\top}a_ia_i^{\top}}{\Vert A \Vert_F^2 -\Vert a_i \Vert_2^2}\right)\\
				=&\frac{4}{\Vert A \Vert_F^2} \left(A^{\top}A\sum\limits_{i=1}^m  \frac{1}{\Vert A \Vert_F^2 -\Vert a_i \Vert_2^2}a_ia_i^{\top} \     -      \sum\limits_{i=1}^m  \frac{\Vert a_i \Vert_2^2}{\Vert A \Vert_F^2 -\Vert a_i \Vert_2^2}\ a_ia_i^{\top}\right)\\
				=&4\frac{A^{\top}AA^{\top} \bar{D}  A  - A^{\top}DA }{\Vert A \Vert_F^2},
			\end{aligned}
			\nonumber
		\end{equation}
		where $D\coloneqq \operatorname{diag}(d_1,\ldots,d_m)$ with $d_i=\frac{\Vert a_i \Vert_2^2}{\Vert A \Vert_F^2 -\Vert a_i \Vert_2^2}$, and  $\bar{D} \coloneqq \operatorname{diag}(\bar{d_1},\ldots,\bar{d_m})$ with $\bar{d_i}=\frac{1}{\Vert A \Vert_F^2 -\Vert a_i \Vert_2^2}$.
		By substituting all these equations into \eqref{exp-2.4}, we obtain
		$$\label{exp-result}
		\begin{aligned}
			\mathop{\mathbb{E}}\left[T_{\mathcal{H}_j}T_{\mathcal{H}_i}\right]
			=&I-2\frac{A^{\top} \left(\hat{D}+2D\right)A}{\Vert A \Vert_F^2}
			-2\frac{A^{\top}A}{\Vert A \Vert_F^2}
			+4\frac{A^{\top}AA^{\top}\bar{D}A }{\Vert A \Vert_F^2}\\
			=&I-2\frac{A^{\top} \left(\Delta I+D\right)A}{\Vert A \Vert_F^2}
			-2\frac{A^{\top}A}{\Vert A \Vert_F^2}
			+4\frac{A^{\top}AA^{\top}\bar{D}A }{\Vert A \Vert_F^2}
			\\
			=&I-\frac{2}{\|A\|^2_F}A^{\top}
			M A,
		\end{aligned}
		$$
		where the second equality comes from the reality that $\hat{d_i}+d_i =\Delta   =  \sum\limits_{j=1}^m \frac{\Vert a_j \Vert_2^2}{\Vert A \Vert_F^2 -\Vert a_j \Vert_2^2}$.

		Next we compute the expected composition \( \mathbb{E}\left[ T_{\mathcal{H}_j} T_{\mathcal{H}_i} \right] \) under Strategy II. Since for any $\mathcal{S}_k=\{i,j\}$, 
		$$
		\frac{\operatorname{det}(A_{\mathcal{S}_k} A_{\mathcal{S}_k}^\top)}{\sum_{\mathcal{J} \in \binom{[m]}{2}} \operatorname{det}(A_{\mathcal{J}} A_{\mathcal{J}}^\top)}=\frac{2\left(\Vert a_{j}\Vert_2^2 \Vert a_{i} \Vert_2^2   -   \left\langle a_{i},a_{j} \right\rangle^2 \right)}
		{ \Vert A \Vert_F^4 -\Vert AA^{\top} \Vert_F^2 }.
		$$
		Hence, we have
		\begin{equation}
			\label{exp-zhankaishi}
			\begin{aligned}
				\mathop{\mathbb{E}}\left[ T_{\mathcal{H}_{j}}T_{\mathcal{H}_{i}}\right]
				=&
				\sum_{\{i,j\}\in \binom{[m]}{2}} 
				\frac{2\left(\Vert a_{j}\Vert_2^2 \Vert a_{i} \Vert_2^2   -   \left\langle a_{i},a_{j} \right\rangle^2 \right)}
				{ \Vert A \Vert_F^4 -\Vert AA^{\top} \Vert_F^2 }
				\left(I-2\frac{a_{j}a_{j}^{\top}}{\|a_{j}\|^2_2}\right)
				\left(I-2\frac{a_{i}a_{i}^{\top}}{\|a_{i}\|^2_2}\right)\\
				=&
				\sum\limits_{i=1}^m \ 
				\sum\limits_{j=1}^m 
				\frac{\Vert a_j\Vert_2^2 \Vert a_i \Vert_2^2   -   \left\langle a_i,a_j \right\rangle^2}
				{ \Vert A \Vert_F^4 -\Vert AA^{\top} \Vert_F^2 }
				\left(I-2\frac{a_ja_j^{\top}}{\|a_j\|^2_2}\right)
				\left(I-2\frac{a_ia_i^{\top}}{\|a_i\|^2_2}\right)\\
				=& I
				\underbrace{-2 \sum\limits_{i=1}^m \ 
					\sum\limits_{j=1}^m 
					\frac{\Vert a_j\Vert_2^2 \Vert a_i \Vert_2^2   -   \left\langle a_i,a_j \right\rangle^2}
					{ \Vert A \Vert_F^4 -\Vert AA^{\top} \Vert_F^2}
					\frac{a_ja_j^{\top}}{\|a_j\|^2_2} }_{\textcircled{e}}
				\\
				&\underbrace{
					-2\sum\limits_{i=1}^m \ 
					\sum\limits_{j=1}^m 
					\frac{\Vert a_j\Vert_2^2 \Vert a_i \Vert_2^2   -   \left\langle a_i,a_j \right\rangle^2}
					{ \Vert A \Vert_F^4 -\Vert AA^{\top} \Vert_F^2}
					\frac{a_ia_i^{\top}}{\|a_i\|^2_2}}_{\textcircled{f}}\\
				&\underbrace{+4\sum\limits_{i=1}^m \ 
					\sum\limits_{j=1}^m 
					\frac{\Vert a_j\Vert_2^2 \Vert a_i \Vert_2^2   -   \left\langle a_i,a_j \right\rangle^2}
					{ \Vert A \Vert_F^4 -\Vert AA^{\top} \Vert_F^2}
					\frac{a_ja_j^{\top}}{\|a_j\|^2_2}
					\frac{a_ia_i^{\top}} {\|a_i\|^2_2}}_{\textcircled{g}}.
			\end{aligned}
		\end{equation}
		We first analyze the expression $\textcircled{e}$. We have 
		\begin{equation}
			\begin{aligned}\label{a}
				\textcircled{e}=&-2\sum\limits_{i=1}^m \ 
				\sum\limits_{j=1}^m 
				\frac{\Vert a_j\Vert_2^2 \Vert a_i \Vert_2^2   -   \left\langle a_i,a_j \right\rangle^2}
				{ \Vert A \Vert_F^4 -\Vert AA^{\top} \Vert_F^2}
				\frac{a_ja_j^{\top}}{\|a_j\|^2_2}
				\\
				=&
				-2\sum\limits_{i=1}^m \ 
				\sum\limits_{j=1}^m
				\frac{g_{i,j}\Vert a_i \Vert_2^2 }
				{ \Vert A \Vert_F^4 -\Vert AA^{\top} \Vert_F^2}
				a_ja_j^{\top} 
				\\
				=&-\frac{2}
				{\Vert A \Vert_F^4 -\Vert AA^{\top} \Vert_F^2}
				\sum\limits_{i=1}^m
				\left(A^{\top} \left(\Vert a_i \Vert_2^2 G^i\right)A \right) \\
				=&-\frac{2}
				{\Vert A \Vert_F^4 -\Vert AA^{\top} \Vert_F^2}
				A^{\top} GA,
			\end{aligned}
		\end{equation}
		where matrix $G^i\coloneqq \operatorname{diag}(g_{i,1},\ldots,g_{i,m})$ with $g_{i,j}=1-\frac{\langle a_i,a_j \rangle^2}{\|a_i\|^2_2\|a_j\|^2_2}$, and
		$G\coloneqq \sum\limits_{i=1}^m \Vert a_i \Vert_2^2\ G^i = \operatorname{diag}(g_1,\ldots,g_m)$ with $ g_i=\sum\limits_{j=1}^m  g_{i,j}\|a_j\|^2_2  ,\ i,j=1,\ldots, m.$
		It is easily to see that  $\textcircled{e}=\textcircled{f}$.
		Next, we analyze the expression $\textcircled{g}$. We have
		\begin{equation}
			\begin{aligned}\label{c}
				\textcircled{g}=&4\sum\limits_{i=1}^m \ 
				\sum\limits_{j=1}^m
				\frac{\Vert a_j\Vert_2^2 \Vert a_i \Vert_2^2   -   \left\langle a_i,a_j \right\rangle^2}
				{ \Vert A \Vert_F^4 -\Vert AA^{\top} \Vert_F^2}
				\frac{a_ja_j^{\top}}{\|a_j\|^2_2}
				\frac{a_ia_i^{\top}} {\|a_i\|^2_2}
				\\
				=&
				\frac{4}
				{ \Vert A \Vert_F^4 -\Vert AA^{\top} \Vert_F^2}
				\sum\limits_{i=1}^m \ 
				\sum\limits_{j=1}^m
				g_{i,j}
				a_ja_j^{\top}
				a_ia_i^{\top} \\
				=&
				\frac{4}
				{ \Vert A \Vert_F^4 -\Vert AA^{\top} \Vert_F^2}
				\sum\limits_{i=1}^m  
				A^{\top} G^iA 
				a_ia_i^{\top}
				\\&=
				\frac{4}
				{ \Vert A \Vert_F^4 -\Vert AA^{\top} \Vert_F^2}
				A^{\top}\bar{G}A,
			\end{aligned}
		\end{equation}
		where $\bar{G} \coloneqq (\bar{g_1},\ldots,\bar{g_m})$ with $\bar{g_i}=G^iAa_i$.
		By substituting \eqref{a} and \eqref{c} into \eqref{exp-zhankaishi}, we obtain
		\begin{equation}
			\begin{aligned}
				\mathop{\mathbb{E}}\left[ T_{\mathcal{H}_{j}}T_{\mathcal{H}_{i}}\right]
				=&
				I
				-\frac{4}
				{ \Vert A \Vert_F^4 -\Vert AA^{\top} \Vert_F^2}
				A^{\top}
				\left(G-\bar{G}\right)A \\
				=&
				I-\frac{4}{\|A\|^4_F - \|AA^{\top}\|^2_F}A^{\top}
				N A,
			\end{aligned}
			\nonumber
		\end{equation}
		where matrix $N=G-\bar{G}$ is given by \eqref{definition-M}.
	\end{proof}

	
	\begin{lemma}[\cite{han2024randomized}, Lemma A.2]
		\label{lemma-22}
		Let $\{x^k\}_{k\geq0}$ and $\{z^k\}_{\geq 0}$ be the sequences generated by Algorithm \ref{alg:PRDR}. Then
		$$
		\|(1-\alpha)x^{k}+\alpha z^k-x^*\|^2_2=\big(\alpha^2+(1-\alpha)^2\big)\|x^k-x^*\|^2_2
		+2\alpha(1-\alpha)\langle z^k-x^*,x^k-x^*\rangle.
		$$
	\end{lemma}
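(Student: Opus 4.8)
The plan is to reduce this identity to the elementary expansion of the squared norm of a convex combination, using two facts about the point $x^*$: it is fixed by every hyperplane reflection, and reflections are isometries. Since $x^*$ is assumed to solve $Ax = b$, it satisfies $\langle a_i, x^*\rangle = b_i$ for all $i \in [m]$, so $x^* \in \mathcal{H}_i$ for each $i$; equivalently, from the formula $\mathcal{R}_{\mathcal{H}_i}(x) = x - 2\frac{\langle a_i, x\rangle - b_i}{\|a_i\|_2^2}a_i$ the numerator vanishes at $x = x^*$, giving $\mathcal{R}_{\mathcal{H}_i}(x^*) = x^*$ for every $i$.

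The key step I would establish is that $\|z^k - x^*\|_2 = \|x^k - x^*\|_2$. To see this, I would write each reflection in terms of the matrix $T_{\mathcal{H}_i}$ from \eqref{reflection operator}: using $\mathcal{R}_{\mathcal{H}_i}(x^*) = x^*$ one has $\mathcal{R}_{\mathcal{H}_i}(x) - x^* = T_{\mathcal{H}_i}(x - x^*)$, and since $T_{\mathcal{H}_i}$ satisfies $T_{\mathcal{H}_i}T_{\mathcal{H}_i} = I$ and hence $\|T_{\mathcal{H}_i}y\|_2 = \|y\|_2$ for all $y$, each reflection preserves the distance to $x^*$. As $z^k = \mathcal{R}_{\mathcal{H}_{i_{k_2}}}\mathcal{R}_{\mathcal{H}_{i_{k_1}}}(x^k)$ is a composition of two such reflections, applying this twice yields $\|z^k - x^*\|_2 = \|x^k - x^*\|_2$.

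Finally, I would set $p := x^k - x^*$ and $q := z^k - x^*$, so that $(1-\alpha)x^k + \alpha z^k - x^* = (1-\alpha)p + \alpha q$, and expand
$$
\|(1-\alpha)p + \alpha q\|_2^2 = (1-\alpha)^2\|p\|_2^2 + \alpha^2\|q\|_2^2 + 2\alpha(1-\alpha)\langle p, q\rangle.
$$
Substituting $\|q\|_2^2 = \|p\|_2^2 = \|x^k - x^*\|_2^2$ from the isometry step gives the coefficient $(1-\alpha)^2 + \alpha^2$ on $\|x^k - x^*\|_2^2$, while the cross term is $2\alpha(1-\alpha)\langle x^k - x^*, z^k - x^*\rangle$, which is exactly the claimed identity. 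The only nontrivial ingredient is the isometry observation equating $\|z^k - x^*\|_2$ with $\|x^k - x^*\|_2$; everything else is a routine expansion, so I do not expect any genuine obstacle.
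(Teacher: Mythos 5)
Your proof is correct and is essentially the intended argument: the paper imports this lemma from \cite{han2024randomized} without reproducing a proof, but it records exactly your key ingredient immediately beforehand ($T_{\mathcal{H}_i}T_{\mathcal{H}_i}=I$, hence $\|T_{\mathcal{H}_i}y\|_2=\|y\|_2$), and your reduction via $z^k-x^*=T_{\mathcal{H}_{i_{k_2}}}T_{\mathcal{H}_{i_{k_1}}}(x^k-x^*)$ followed by the convex-combination expansion is the standard proof. You were also right to note that the identity needs $x^*$ to be a solution of $Ax=b$ (so that reflections fix it); this hypothesis is implicit in the lemma's statement and is satisfied by $x^0_*$ wherever the lemma is invoked.
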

	
	\begin{lemma}[\cite{han2024randomized}, Lemma A.3]
		\label{lemma-23}
		Let $\{x^k\}_{k\geq 0}$ be the sequence generated by Algorithm \ref{alg:PRDR} and $x_*^0=A^{\dagger}b+(I-A^\dagger A)x^0$. Then $ x^k - x_*^0 \in \operatorname{Range}(A^\top) $.
	\end{lemma}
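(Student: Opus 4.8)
The plan is to prove the claim by induction on $k$, exploiting the fact that $\operatorname{Range}(A^\top)$ is exactly the span of the rows $a_1,\dots,a_m$ of $A$, and that each PRDR update modifies the previous iterate only by adding scalar multiples of two such rows. Consequently, once $x^0 - x_*^0$ is shown to lie in $\operatorname{Range}(A^\top)$, this membership is preserved step by step.

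For the base case $k=0$, I would substitute the definition of $x_*^0$ and simplify:
\[
x^0 - x_*^0 = x^0 - A^\dagger b - (I - A^\dagger A)x^0 = A^\dagger A\, x^0 - A^\dagger b.
\]
Since $A^\dagger A$ is the orthogonal projector onto $\operatorname{Range}(A^\top)$, the first term $A^\dagger A\, x^0$ lies in $\operatorname{Range}(A^\top)$; and since $\operatorname{Range}(A^\dagger) = \operatorname{Range}(A^\top)$, the second term $A^\dagger b$ also lies in $\operatorname{Range}(A^\top)$. Hence $x^0 - x_*^0 \in \operatorname{Range}(A^\top)$.

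For the inductive step, I would assume $x^k - x_*^0 \in \operatorname{Range}(A^\top)$ and unroll Steps 2--3 of Algorithm~\ref{alg:PRDR} to write
\[
z^k = x^k - 2\frac{\langle a_{i_{k_1}}, x^k\rangle - b_{i_{k_1}}}{\|a_{i_{k_1}}\|_2^2}\, a_{i_{k_1}} - 2\frac{\langle a_{i_{k_2}}, y^k\rangle - b_{i_{k_2}}}{\|a_{i_{k_2}}\|_2^2}\, a_{i_{k_2}},
\]
so that
\[
x^{k+1} - x_*^0 = (1-\alpha)(x^k - x_*^0) + \alpha\,(z^k - x_*^0).
\]
The difference $z^k - x_*^0$ equals $(x^k - x_*^0)$ plus scalar multiples of $a_{i_{k_1}}$ and $a_{i_{k_2}}$; because the rows of $A$ belong to $\operatorname{Range}(A^\top)$ and $x^k - x_*^0 \in \operatorname{Range}(A^\top)$ by hypothesis, it follows that $z^k - x_*^0 \in \operatorname{Range}(A^\top)$. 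As $\operatorname{Range}(A^\top)$ is a subspace and hence closed under linear combinations, the vector $x^{k+1} - x_*^0$ lies in it as well, closing the induction.

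The only delicate point, and thus the main obstacle, is the base case, specifically the two pseudoinverse identities $\operatorname{Range}(A^\dagger A) = \operatorname{Range}(A^\top)$ and $\operatorname{Range}(A^\dagger) = \operatorname{Range}(A^\top)$. Both are standard consequences of the Moore--Penrose properties (for instance $A^\dagger = A^\top(AA^\top)^\dagger$, and $A^\dagger A$ being the symmetric idempotent onto the row space), so no additional machinery is required; the inductive step itself is purely algebraic and immediate once the subspace structure is in place.
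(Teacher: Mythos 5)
Your proof is correct and takes essentially the same route as the source: the paper itself does not reprove this lemma but imports it from \cite{han2024randomized}, where Lemma A.3 is established by exactly this induction --- the base case $x^0 - x_*^0 = A^\dagger A x^0 - A^\dagger b \in \operatorname{Range}(A^\top)$ via the Moore--Penrose identities, and the inductive step observing that each iterate changes only by scalar multiples of rows of $A$, which lie in $\operatorname{Range}(A^\top)$. No gaps; the pseudoinverse facts you invoke are standard and suffice.
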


	Now, we are ready to prove Theorem \ref{main-PRDR}.

	\begin{proof}[Proof of Theorem \ref{main-PRDR}]
		By taking the conditional expectation, we obtain
		\begin{equation}
			\begin{aligned}\label{exp-inner product}
				&\mathop{\mathbb{E}}\big[\|x^{k+1}-x^{0}_*\|^2_2 \mid x^k\big]=
				\mathop{\mathbb{E}}\big[\|(1-\alpha)x^{k}+\alpha z^k-x^{0}_*\|^2_2|x^k\big]\\
				=&\left(\alpha^2+(1-\alpha)^2\right)\|x^k-x^{0}_*\|^2_2
				+2\alpha(1-\alpha)
				\mathop{\mathbb{E}}\left[\left\langle T_{\mathcal{H}_{i_{k_2}}}T_{\mathcal{H}_{i_{k_1}}}(x^k-x^{0}_*),x^k-x^{0}_*\right\rangle\right]\\
				=&\left(\alpha^2+(1-\alpha)^2\right)\|x^k-x^{0}_*\|^2_2
				+2\alpha(1-\alpha)
				\left\langle \mathop{\mathbb{E}}\left[ T_{\mathcal{H}_{i_{k_2}}}T_{\mathcal{H}_{i_{k_1}}}\right](x^k-x^{0}_*),x^k-x^{0}_*\right\rangle ,
			\end{aligned}
		\end{equation}
		where the first equality follows from Step $3$ in Algorithm \ref{alg:PRDR}, the second equality follows from Lemma \ref{lemma-22}, and the last equality follows from the  linearity of the expectation.
		
		First, we consider Algorithm \ref{alg:PRDR} with  Strategy I.
		According to Lemma \ref{exp-reflection-operator} and  Proposition~\ref{propMN}, we have
		\begin{equation}
			\begin{aligned}
				\label{thm-qyz-1}
				\langle \mathop{\mathbb{E}}[ T_{\mathcal{H}_{i_{k_2}}}T_{\mathcal{H}_{i_{k_1}}}](x^k-x_*^0),x^k-x_*^0\rangle 
				= \|x^k-x_*^0\|^2_2-\frac{2}{\|A\|^2_F} 
				\|	M^{\frac{1}{2}}
				A(x^k-x_*^0 )\|^2_2.
			\end{aligned}
		\end{equation}
		Moreover, by Lemma~\ref{lemma-23}, we know that \( x^k - x^0_* \in \operatorname{Range}(A^\top) \). Since \( M \) is positive definite, it follows that \( x^k - x^0_* \in \operatorname{Range}(A^\top M^{1/2}) \). Therefore, we have the inequality
		$$		\|	M^{\frac{1}{2}}
		A(x^k-x_*^0 )\|^2_2
		\geq 
		\sigma_{\min}^2(M^{\frac{1}{2}}A)\|x^k-x_*^0\|^2_2.$$
		Substituting this bound and \eqref{thm-qyz-1} into \eqref{exp-inner product}, we obtain
		\begin{equation}
			\begin{aligned}
				\mathop{\mathbb{E}}[\|x^{k+1}-x_*^0\|^2_2\mid x^k] 
				\leq \left(1-4\alpha(1-\alpha)\frac{\sigma_{\min}^2(M^{\frac{1}{2}}A)}{\|A\|^2_F}\right)
				\|x^k-x_*^0\|^2_2.
			\end{aligned}
			\nonumber
		\end{equation}
		Taking the expectation over the entire history yields the desired linear convergence result. 
		
		The proof under Strategy II follows analogously by replacing the matrix \( M \) with the corresponding matrix \( N \) for Strategy II. The same argument yields the desired result.
	\end{proof}

	\subsection{Omitted proofs in Section \ref{section4}}
	
	\begin{proof}[Proof of Proposition \ref{prob-full}]
		We prove the result by contradiction. Suppose \( \dim(\Pi_k) < 2 \). Then there exists a scalar \( \lambda \in \mathbb{R} \) such that
		$
		x^k - z^k = \lambda (x^k - x^{k-1}).
		$
		If \( \lambda = 0 \), then \( x^k = z^k \), which contradicts the assumption \( z^k \neq x^k \). Hence, we must have \( \lambda \neq 0 \).
		Since the algorithm ensures that
		$
		\langle x^{k}-x^{k-1}, x^k-x^0_{*}\rangle=0,
		$
		it follows that
		$$
		\langle x^{k}-z^{k}, x^k-x^0_{*}\rangle=0.
		$$
		Recall  that $z^k = x^k - 2 u_ka_{i_{k_1}} - 2 v_ka_{i_{k_2}}$, we get
		$$
		\begin{aligned}
			0=\langle  u_ka_{i_{k_1}} + v_ka_{i_{k_2}}, x^k-x^0_{*}\rangle=u_k \langle  a_{i_{k_1}}, x^k-x^0_{*}\rangle+v_k \langle  a_{i_{k_2}}, x^k-x^0_{*}\rangle.
		\end{aligned}
		$$
		Without loss of generality,   we assume that  
		$
		 \frac{ \langle a_{i_{k_2}}, x^k \rangle - b_{i_{k_2}} }{ \|a_{i_{k_2}}\|_2 }$ and $\frac{ \langle a_{i_{k_1}}, x^k \rangle - b_{i_{k_1}} }{ \|a_{i_{k_1}}\|_2 }
		$ 
		 have the same sign.
		Using the fact that \( \langle a_i, x^0_* \rangle = b_i \) for any $i\in[m]$, and the definitions
		$
		u_k = \frac{\langle a_{i_{k_1}}, x^k \rangle - b_{i_{k_1}}}{\|a_{i_{k_1}}\|_2^2} $ and $
		v_k = \frac{\langle a_{i_{k_2}}, x^k \rangle - b_{i_{k_2}} - 2 \langle a_{i_{k_2}}, a_{i_{k_1}} \rangle u_k}{\|a_{i_{k_2}}\|_2^2},
		$ we can rewrite the above as
		$$
		\begin{aligned}
			0=&\frac{(\langle a_{i_{k_1}}, x^k \rangle - b_{i_{k_1}})^2}{\|a_{i_{k_1}}\|_2^2}+\frac{(\langle a_{i_{k_2}}, x^k \rangle - b_{i_{k_2}})^2}{\|a_{i_{k_2}}\|_2^2}\\
			&-2\frac{ \langle a_{i_{k_2}}, a_{i_{k_1}} \rangle  }{\|a_{i_{k_1}}\|_2\|a_{i_{k_2}}\|_2}\cdot\frac{ \langle a_{i_{k_2}}, x^k \rangle - b_{i_{k_2}}}{\|a_{i_{k_2}}\|_2}\cdot\frac{\langle a_{i_{k_1}}, x^k \rangle - b_{i_{k_1}}}{\|a_{i_{k_1}}\|_2}
			\\
			\geq &\frac{(\langle a_{i_{k_1}}, x^k \rangle - b_{i_{k_1}})^2}{\|a_{i_{k_1}}\|_2^2}+\frac{(\langle a_{i_{k_2}}, x^k \rangle - b_{i_{k_2}})^2}{\|a_{i_{k_2}}\|_2^2}-2 \frac{ \langle a_{i_{k_2}}, x^k \rangle - b_{i_{k_2}}}{\|a_{i_{k_2}}\|_2}\cdot\frac{\langle a_{i_{k_1}}, x^k \rangle - b_{i_{k_1}}}{\|a_{i_{k_1}}\|_2}\\
			=&\left(\frac{\langle a_{i_{k_1}}, x^k \rangle - b_{i_{k_1}}}{\|a_{i_{k_1}}\|_2}-\frac{\langle a_{i_{k_2}}, x^k \rangle - b_{i_{k_2}}}{\|a_{i_{k_2}}\|_2} \right)^2,
		\end{aligned}
		$$
		where the inequality follows from the Cauchy-Schwarz inequality. Hence, we have 
		\begin{equation}\label{xie-proof-0604-1}
			\frac{\langle a_{i_{k_1}}, x^k \rangle - b_{i_{k_1}}}{\|a_{i_{k_1}}\|_2}=\frac{\langle a_{i_{k_2}}, x^k \rangle - b_{i_{k_2}}}{\|a_{i_{k_2}}\|_2}.
		\end{equation}
		Substituting this back, we find
		$$	0=2\frac{(\langle a_{i_{k_1}}, x^k \rangle - b_{i_{k_1}})^2}{\|a_{i_{k_1}}\|_2^2}-2\frac{ \langle a_{i_{k_2}}, a_{i_{k_1}} \rangle  }{\|a_{i_{k_1}}\|_2\|a_{i_{k_2}}\|_2}\cdot\frac{(\langle a_{i_{k_1}}, x^k \rangle - b_{i_{k_1}})^2}{\|a_{i_{k_1}}\|_2^2}.$$
		This leads to $\frac{(\langle a_{i_{k_1}}, x^k \rangle - b_{i_{k_1}})^2}{\|a_{i_{k_1}}\|_2^2}=0$ or $\frac{ \langle a_{i_{k_2}}, a_{i_{k_1}} \rangle  }{\|a_{i_{k_1}}\|_2\|a_{i_{k_2}}\|_2}=1$. 
		If 
		$
		\frac{(\langle a_{i_{k_1}}, x^k \rangle - b_{i_{k_1}})^2}{\|a_{i_{k_1}}\|_2^2} = 0,
		$
		then it follows from \eqref{xie-proof-0604-1} that
		$
		\frac{(\langle a_{i_{k_2}}, x^k \rangle - b_{i_{k_2}})^2}{\|a_{i_{k_2}}\|_2^2} = 0
		$. Consequently, we have \( u_k = v_k = 0 \), which implies \( z^k = x^k \), contradicting the assumption \( z^k \neq x^k \).
		On the other hand, if
		$
		\frac{\langle a_{i_{k_2}}, a_{i_{k_1}} \rangle}{\|a_{i_{k_1}}\|_2 \|a_{i_{k_2}}\|_2} = 1,
		$
		then the vectors \( a_{i_{k_1}} \) and \( a_{i_{k_2}} \) are colinear, meaning the corresponding hyperplanes \( \mathcal{H}_{i_{k_1}} \) and \( \mathcal{H}_{i_{k_2}} \) coincide. In this case, the reflection step yields \( z^k = x^k \), again contradicting the assumption \( z^k \neq x^k \).
		Therefore, our initial assumption that \( \dim(\Pi_k) < 2 \) must be false. Hence, \( \dim(\Pi_k) = 2 \).
	\end{proof}

	\begin{proof}[Proof of Proposition \ref{xie-0604-p}]
		Without loss of generality, assume that \(a_1 \neq 0\), and define the index set
		\[
		\mathcal{I} := \{ i \in [m] \mid a_i \text{ is linearly independent of } a_1 \}.
		\]
		Since \(\operatorname{rank}(A) \geq 2\), the set \(\mathcal{I}\) is nonempty.
		For any \(i \in \mathcal{I}\), using the same argument as in the proof of Proposition~\ref{prob-full}, we obtain the following two identities:
		$$
		\frac{\langle a_{1}, \tilde{x} \rangle - b_{1}}{\|a_{1}\|_2}=\frac{\langle a_{i}, \tilde{x} \rangle - b_{i}}{\|a_{i}\|_2}
		$$
		and
		$$
		0=2\frac{(\langle a_{1}, \tilde{x} \rangle - b_1)^2}{\|a_1\|_2^2}-2\frac{ \langle a_{1}, a_{i} \rangle  }{\|a_{1}\|_2\|a_{i}\|_2}\cdot\frac{(\langle a_{1}, \tilde{x} \rangle - b_{1})^2}{\|a_{1}\|_2^2}.
		$$
		Since \(a_1\) and \(a_i\) are linearly independent, it follows that \(\frac{\langle a_1, a_i \rangle}{\|a_1\|_2 \|a_i\|_2} < 1\).  Hence we know that $\frac{\langle a_{1}, \tilde{x} \rangle - b_{1}}{\|a_{1}\|_2}=\frac{\langle a_{i}, \tilde{x} \rangle - b_{i}}{\|a_{i}\|_2}=0$, i.e., $\langle a_{1}, \tilde{x} \rangle = b_{1}$ and  $\langle a_{i}, \tilde{x} \rangle = b_{i}$ for all $i\in\mathcal{I}$. 
		Now consider any \(j \in [m] \setminus \mathcal{I}\). Since \(a_j\) is linearly dependent on \(a_1\), there exists a scalar \(\lambda_j \in \mathbb{R}\) such that \(a_j = \lambda_j a_1\). Then
		$
		\langle a_j, \tilde{x} \rangle = \lambda_j \langle a_1, \tilde{x} \rangle = \lambda_j b_1 = b_j,
		$
		where the last equality follows from the consistency of the system \(Ax = b\). Hence, \(\langle a_j, \tilde{x} \rangle = b_j\) for all \(j \in [m] \setminus \mathcal{I}\).
		Combining the above, we conclude that \(\langle a_i, \tilde{x} \rangle = b_i\) for all \(i \in [m]\), i.e., \(\tilde{x}\) is a solution to the linear system \(Ax = b\).
	\end{proof}

		Next, we will prove Theorem \ref{main-AmPRDR}. First, we introduce a useful lemma.
	\begin{lemma} \label{tilde}
		Let $\{x^k\}_{k\geq0}$ and $\{z^k\}_{k\geq0}$ be the sequence generated by Algorithm \ref{alg:AmPRDR} and $x_*^0=A^{\dagger}b+(I-A^\dagger A)x^0$. Then 
		$$
		\|x^{k+1} - \tilde{x}^{k+1}\|_2^2 =\cos^2 \theta_k \|\tilde{x}^{k+1} - x_*^0\|_2^2,
		$$ 
		where $\tilde{x}^{k+1}=\frac{1}{2}x^k+\frac{1}{2}z^k$, and \(\theta_k\) is the angle between the vectors \(\tilde{x}^{k+1} - x_*^0\) and \(\zeta_k= \langle z^k - x^k,\, x^k - x^{k-1} \rangle (z^k - x^k) - \|z^k - x^k\|_2^2 (x^k - x^{k-1})\).
	\end{lemma}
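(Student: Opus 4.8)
The plan is to read the identity geometrically. Recall that the minimizers $\alpha_k,\beta_k$ in \eqref{Parameter2} are chosen so that $x^{k+1}$ solves \eqref{AS-pro2}, i.e.\ $x^{k+1}$ is the orthogonal projection of $x_*^0$ onto the affine set $\Pi_k = x^k + \operatorname{Span}\{z^k - x^k,\, x^k - x^{k-1}\}$; by Proposition~\ref{prob-full} we have $\dim(\Pi_k)=2$, so $z^k-x^k$ and $x^k-x^{k-1}$ are linearly independent (hence $\zeta_k\neq 0$ and $\theta_k$ is well defined). Writing $p := z^k - x^k$ and $q := x^k - x^{k-1}$, both $x^{k+1}$ and $\tilde x^{k+1} = \tfrac12(x^k+z^k) = x^k + \tfrac12 p$ lie in $\Pi_k$, so $\tilde x^{k+1} - x^{k+1} \in \operatorname{Span}\{p,q\}$.

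First I would record that, since $x^{k+1}$ is the orthogonal projection of $x_*^0$ onto $\Pi_k$, the residual $x^{k+1}-x_*^0$ is orthogonal to the whole direction space $\operatorname{Span}\{p,q\}$. Consequently the splitting
$$\tilde x^{k+1} - x_*^0 = (\tilde x^{k+1} - x^{k+1}) + (x^{k+1} - x_*^0)$$
is orthogonal, and therefore $\tilde x^{k+1} - x^{k+1}$ is exactly the orthogonal projection of $\tilde x^{k+1} - x_*^0$ onto $\operatorname{Span}\{p,q\}$. In particular $\|x^{k+1}-\tilde x^{k+1}\|_2^2$ equals the squared length of that projection, so it remains to show this projection coincides with the projection onto the single line $\mathbb{R}\zeta_k$.

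The core of the argument is to exhibit a convenient orthogonal basis of the plane $\operatorname{Span}\{p,q\}$. A one-line computation gives $\langle \zeta_k, p\rangle = \langle p,q\rangle\|p\|_2^2 - \|p\|_2^2\langle q,p\rangle = 0$, so $\{p,\zeta_k\}$ is an orthogonal basis of $\operatorname{Span}\{p,q\}$. Hence the projection of $\tilde x^{k+1}-x_*^0$ onto the plane decomposes as the sum of its projections onto $p$ and onto $\zeta_k$, and the proof reduces to checking that the $p$-component vanishes, i.e.\ $\langle \tilde x^{k+1}-x_*^0,\, p\rangle = 0$.

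This last identity is where the structure of the reflections enters and is the main obstacle. Since $Ax_*^0=b$, the point $x_*^0$ lies on every hyperplane $\mathcal{H}_i$, so each reflection $\mathcal{R}_{\mathcal{H}_i}$ fixes $x_*^0$; as $z^k = \mathcal{R}_{\mathcal{H}_{i_{k_2}}}\mathcal{R}_{\mathcal{H}_{i_{k_1}}}(x^k)$ is a composition of isometries fixing $x_*^0$, we obtain $\|z^k - x_*^0\|_2 = \|x^k - x_*^0\|_2$. I would then expand $\langle \tilde x^{k+1}-x_*^0,\, p\rangle = \tfrac12\langle (x^k-x_*^0)+(z^k-x_*^0),\, (z^k-x_*^0)-(x^k-x_*^0)\rangle$; the symmetric cross terms cancel and the remaining difference $\tfrac12\big(\|z^k-x_*^0\|_2^2 - \|x^k-x_*^0\|_2^2\big)$ is zero by the isometry identity. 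With the $p$-component gone, the plane projection equals the $\zeta_k$-line projection, whose squared norm is $\dfrac{\langle \tilde x^{k+1}-x_*^0,\zeta_k\rangle^2}{\|\zeta_k\|_2^2} = \cos^2\theta_k\,\|\tilde x^{k+1}-x_*^0\|_2^2$, which is the claimed identity.
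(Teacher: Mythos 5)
Your proof is correct, and it takes a genuinely different route from the paper's. The paper's proof is computational: it defines $l_k := \langle \tilde{x}^{k+1}-x_*^0,\zeta_k\rangle/\|\zeta_k\|_2^2$ and then verifies, by substituting the explicit parameter formulas \eqref{Parameter2} and grinding through the algebra (using $\langle x^k-x_*^0, x^k-x^{k-1}\rangle=0$ and the identity $\langle x^k-z^k, x^k-x_*^0\rangle = \tfrac12\|z^k-x^k\|_2^2$), that $\tilde{x}^{k+1}-l_k\zeta_k$ coincides with the update $x^k+\alpha_k(z^k-x^k)+\beta_k(x^k-x^{k-1})=x^{k+1}$; the lemma then follows from $\|x^{k+1}-\tilde{x}^{k+1}\|_2^2=l_k^2\|\zeta_k\|_2^2$. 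You never touch the formulas \eqref{Parameter2}: instead you use their \emph{variational characterization} — that $x^{k+1}$ is the orthogonal projection of $x_*^0$ onto $\Pi_k$ — so that $\tilde{x}^{k+1}-x^{k+1}$ is the projection of $\tilde{x}^{k+1}-x_*^0$ onto $\operatorname{Span}\{p,q\}$, and then exploit the orthogonal basis $\{p,\zeta_k\}$ together with the vanishing of the $p$-component (via the reflection-isometry identity $\|z^k-x_*^0\|_2=\|x^k-x_*^0\|_2$, which is also the hidden content of the paper's $\tfrac12$ identity). Your version is shorter and explains \emph{why} $\zeta_k$ is the relevant direction: it spans the part of the plane orthogonal to $p$, and $\tilde{x}^{k+1}-x_*^0$ has no $p$-component. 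The trade-off is that your argument leans on the claim that \eqref{Parameter2} really does solve \eqref{AS-pro2} — which requires $\dim(\Pi_k)=2$ (Proposition \ref{prob-full}) and the inductively maintained orthogonality $\langle x^k-x_*^0,x^k-x^{k-1}\rangle=0$ — whereas the paper's verification needs only the update formulas as written (though it, too, invokes the same orthogonality in its computation, so the logical dependencies are in the end equivalent). Both approaches are sound; yours would make a cleaner exposition of the lemma.
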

	\begin{proof}
		Since $\Vert \zeta_k\Vert^2_2 =\Vert z^k-x^k\Vert_2^2(\Vert z^k-x^k\Vert_2^2\Vert x^k-x^{k-1}\Vert_2^2-\langle z^k-x^k,x^k-x^{k-1}\rangle^2) $ and $\text{dim}(\Pi_k)=2 (k\geq 1)$, we know that $\zeta_k\neq 0$. Let $l_k\coloneqq \frac{\langle \tilde{x}^{k+1} - x_*^0,\zeta_k\rangle}{\Vert \zeta_k\Vert^2_2}$ and note that $\tilde{x}^{k+1}=\frac{1}{2}x^k+\frac{1}{2}z^k$, then we have 
		\begin{equation*}
			\begin{aligned}
				l_k\zeta_k
				&=\frac{\langle x^k - x_*^0,\zeta_k\rangle+\frac{1}{2}\langle z^k - x^k,\zeta_k\rangle}{\Vert \zeta_k\Vert^2_2}\zeta_k\\
				&=\frac{\langle x^k - x_*^0,\zeta_k\rangle}
				{\Vert \zeta_k\Vert^2_2}\zeta_k\\
				&=\frac{\langle x^k - x_*^0,\langle z^k-x^k,x^k-x^{k-1}\rangle (z^k-x^k)-\Vert z^k-x^k\Vert_2^2 (x^k-x^{k-1})\rangle}
				{\Vert z^k-x^k\Vert_2^2(\Vert z^k-x^k\Vert_2^2\Vert x^k-x^{k-1}\Vert_2^2-\langle z^k-x^k,x^k-x^{k-1}\rangle^2)}\zeta_k\\
				&\overset{(a)}{=}\frac{\langle z^k-x^k,x^k-x^{k-1}\rangle\langle x^k - x_*^0, z^k-x^k\rangle}
				{\Vert z^k-x^k\Vert_2^2(\Vert z^k-x^k\Vert_2^2\Vert x^k-x^{k-1}\Vert_2^2-\langle z^k-x^k,x^k-x^{k-1}\rangle^2)}\zeta_k\\
				&=\frac{\langle z^k-x^k,x^k-x^{k-1}\rangle^2\langle x^k - x_*^0, z^k-x^k\rangle}
				{\Vert z^k-x^k\Vert_2^2(\Vert z^k-x^k\Vert_2^2\Vert x^k-x^{k-1}\Vert_2^2-\langle z^k-x^k,x^k-x^{k-1}\rangle^2)} (z^k-x^k)\\
				&\quad\ -\underbrace{\frac{\langle z^k-x^k,x^k-x^{k-1}\rangle\langle x^k - x_*^0, z^k-x^k\rangle}
					{\Vert z^k-x^k\Vert_2^2\Vert x^k-x^{k-1}\Vert_2^2-\langle z^k-x^k,x^k-x^{k-1}\rangle^2}}_{\beta_k}(x^k-x^{k-1}),
			\end{aligned}
		\end{equation*}
		where $(a)$ follows from $\langle x^k-x^0_*,x^k-x^{k-1}\rangle=0$ and the last equality follows from the definition of $\zeta_k$.	
		Since 
		\begin{equation*}
			\frac{\langle x^k-z^k,x^k-x^0_*\rangle}{\Vert z^k-x^k\Vert_2^2} 
			=\frac{\frac{1}{2}(\Vert z^k-x^k\Vert_2^2+\Vert x^k-x^0_*\Vert_2^2-\Vert z^k-x^0_*\Vert_2^2)}{\Vert z^k-x^k\Vert_2^2}
			= \frac{1}{2},
		\end{equation*}
		we can obtain that 
		
		\begin{equation*}
			\begin{aligned}
				&\tilde{x}^{k+1}-l_k\zeta_k \\
				&= x^k+\left(\frac{1}{2} -\frac{\langle z^k-x^k,x^k-x^{k-1}\rangle^2\langle x^k - x_*^0, z^k-x^k\rangle}
				{\Vert z^k-x^k\Vert_2^2(\Vert z^k-x^k\Vert_2^2\Vert x^k-x^{k-1}\Vert_2^2-\langle z^k-x^k,x^k-x^{k-1}\rangle^2)}\right) (z^k-x^k)
				\\
				&\quad \ +\beta_k(x^k-x^{k-1})\\
				&=x^k+\left(\frac{1}{2} -\frac{\langle z^k-x^k,x^k-x^{k-1}\rangle^2(-\frac{1}{2}\Vert z^k-x^k\Vert_2^2)}
				{\Vert z^k-x^k\Vert_2^2(\Vert z^k-x^k\Vert_2^2\Vert x^k-x^{k-1}\Vert_2^2-\langle z^k-x^k,x^k-x^{k-1}\rangle^2)}\right) (z^k-x^k)
				\\
				&\quad \ +\beta_k(x^k-x^{k-1})\\
				&=x^k+\frac{1}{2} \frac{\Vert z^k-x^k\Vert_2^2\Vert x^k-x^{k-1}\Vert_2^2}
				{\Vert z^k-x^k\Vert_2^2\Vert x^k-x^{k-1}\Vert_2^2-\langle z^k-x^k,x^k-x^{k-1}\rangle^2} (z^k-x^k)
				+\beta_k(x^k-x^{k-1})\\
				&=x^k+\frac{\Vert x^k-x^{k-1}\Vert_2^2 \langle x^k-z^k,x^k-x^0_*\rangle}
				{\Vert z^k-x^k\Vert_2^2\Vert x^k-x^{k-1}\Vert_2^2-\langle z^k-x^k,x^k-x^{k-1}\rangle^2} (z^k-x^k)
				+\beta_k(x^k-x^{k-1})\\
				&=x^k+\alpha_k(z^k-x^k)
				+\beta_k(x^k-x^{k-1}) \\
				&= x^{k+1}.
			\end{aligned}
		\end{equation*}
		Thus we have 
		$$
		\|x^{k+1} - \tilde{x}^{k+1}\|_2^2 =l_k^2\|\zeta_k\|_2^2 = \frac{\langle \tilde{x}^{k+1} - x_*^0,\zeta_k\rangle^2}{\Vert \zeta_k\Vert^2_2}=\cos^2 \theta_k \|\tilde{x}^{k+1} - x_*^0\|_2^2
		$$
		as desired.
	\end{proof}
	
	Now, we are ready to prove Theorem \ref{main-AmPRDR}.
	\begin{proof}[Proof of Theorem \ref{main-AmPRDR}]
		Since $x^{k+1} - x_*^0$ is orthogonal to $\Pi_k$ and $x^{k+1}, \tilde{x}^{k+1} \in \Pi_k$, the Pythagorean Theorem implies that
		\begin{equation*}
			\|x^{k+1} - x_*^0\|_2^2 = \|\tilde{x}^{k+1} - x_*^0\|_2^2 - \|x^{k+1} - \tilde{x}^{k+1}\|_2^2.
		\end{equation*}
		From Lemma \ref{tilde}, we know that 
		$$
		\|x^{k+1} - \tilde{x}^{k+1}\|_2^2  = \cos^2 \theta_k \|\tilde{x}^{k+1} - x_*^0\|_2^2,
		$$
		where $\theta_k$ denotes the angle between $\tilde{x}^{k+1} - x_*^0$ and $\zeta_k$.
		Hence we have 
		$$
		\begin{aligned}
			\mathbb{E}\left[\|x^{k+1}-x_*^0\|_2^2 \mid  x^k\right]&=\mathbb{E}_{(i,j)\in \mathcal{Q}_k}\left[\|x^{k+1}-x_*^0\|_2^2 \mid  x^k\right]
			\\&=\mathbb{E}_{(i,j)\in \mathcal{Q}_k}\left[\|\tilde{x}^{k+1}-x_*^0\|_2^2  \mid x^k\right]-\mathbb{E}_{(i,j)\in \mathcal{Q}_k}\left[\|x^{k+1}-\tilde{x}^{k+1}\|_2^2 \mid x^k\right]
			\\
			&=\mathbb{E}_{(i,j)\in \mathcal{Q}_k}\left[(1-\cos^2\theta_k)\|\tilde{x}^{k+1}-x_*^0\|_2^2 \mid  x^k\right]
			\\
			&\leq
			\left( 1-\gamma_k \right) \mathbb{E}_{(i,j)\in \mathcal{Q}_k}\left[\|\tilde{x}^{k+1}-x_*^0\|_2^2 \mid  x^k\right]\\
			&\leq \left( 1-\gamma_k \right)\mathbb{E}\left[\|\tilde{x}^{k+1}-x_*^0\|_2^2 \mid  x^k\right],
		\end{aligned}
		$$
		where the first inequality follows from the definition of $\gamma_k$.
		From Theorem \ref{main-PRDR} we can get that when the iteration sequence $\{x^k\}_{k\geq0}$ generated by Algorithm \ref{alg:AmPRDR} using strategy I, we have
		$$
		\mathbb{E}\left[\|\tilde{x}^{k+1}-x_*^0\|_2^2 \mid  x^k\right]
		\leq
		\left( 1-\gamma_k \right)
		\left(1-\frac{\sigma_{\min}^2(M^{\frac{1}{2}}A)}{\|A\|^2_F}
		\right)
		\|x^k - x_*^0\|_2^2 ,
		$$
		and when the iteration sequence $\{x^k\}_{k\geq 0}$ generated by Algorithm \ref{alg:AmPRDR} using strategy II, we have
		$$
		\mathbb{E}\left[\|x^{k+1}-x_*^0\|_2^2 \mid  x^k\right]
		\leq
		\left( 1-\gamma_k \right)
		\left(
		1-2
		\frac{\sigma_{\min}^2(N^{\frac{1}{2}}A)}
		{ \Vert A \Vert_F^4 -\Vert AA^{\top} \Vert_F^2}
		\right)
		\|x^k - x_*^0\|_2^2 ,
		$$
		where $M$ and $N$ are given by \eqref{definition-M} and \eqref{definition-N}, respectively.
	\end{proof}

\end{document}